\documentclass[11pt,a4paper]{amsart}
\usepackage[T1]{fontenc}
\usepackage{iftex}
\ifPDFTeX
  \usepackage[utf8]{inputenc}
\fi
\usepackage{newtxtext}
\usepackage{amsmath,amsthm}
\usepackage{newtxmath}
\usepackage[margin=28mm,headheight=14pt]{geometry}
\usepackage{microtype}
\usepackage{enumitem}
\usepackage{aliascnt,needspace,etoolbox}
\usepackage{tikz-cd}
\usepackage{xcolor}
\definecolor{linkblue}{RGB}{30,65,110}
\usepackage[colorlinks=true,linkcolor=linkblue,citecolor=linkblue,urlcolor=linkblue,
  pdftitle={A Parity Obstruction to Completeness of Object Cotorsion Pairs},
  pdfauthor={Junpeng Ren and Yucheng Wang},pdfsubject={Question 29 in Ideal approximation theory}]{hyperref}
\usepackage[capitalise,noabbrev]{cleveref}
\numberwithin{equation}{section}

\newtheorem{theorem}{Theorem}[section]
\newaliascnt{question}{theorem}
\newtheorem{question}[question]{Question}
\aliascntresetthe{question}
\newaliascnt{proposition}{theorem}
\newtheorem{proposition}[proposition]{Proposition}
\aliascntresetthe{proposition}
\newaliascnt{lemma}{theorem}
\newtheorem{lemma}[lemma]{Lemma}
\aliascntresetthe{lemma}
\newaliascnt{corollary}{theorem}
\newtheorem{corollary}[corollary]{Corollary}
\aliascntresetthe{corollary}
\theoremstyle{definition}
\newaliascnt{definition}{theorem}
\newtheorem{definition}[definition]{Definition}
\aliascntresetthe{definition}
\newaliascnt{notation}{theorem}
\newtheorem{notation}[notation]{Notation}
\aliascntresetthe{notation}
\theoremstyle{remark}
\newaliascnt{remark}{theorem}
\newtheorem{remark}[remark]{Remark}
\aliascntresetthe{remark}
\crefname{theorem}{Theorem}{Theorems}
\crefname{question}{Question}{Questions}
\crefname{proposition}{Proposition}{Propositions}
\crefname{lemma}{Lemma}{Lemmas}
\crefname{corollary}{Corollary}{Corollaries}
\crefname{definition}{Definition}{Definitions}
\crefname{notation}{Notation}{Notations}
\crefname{remark}{Remark}{Remarks}
\BeforeBeginEnvironment{lemma}{\Needspace{7\baselineskip}}
\BeforeBeginEnvironment{proposition}{\Needspace{8\baselineskip}}
\BeforeBeginEnvironment{corollary}{\Needspace{7\baselineskip}}
\BeforeBeginEnvironment{definition}{\Needspace{10\baselineskip}}

\newcommand{\A}{\mathcal A}
\newcommand{\D}{\mathcal D}
\newcommand{\F}{\mathcal F}
\newcommand{\C}{\mathcal C}
\newcommand{\I}{\mathcal I}
\newcommand{\J}{\mathcal J}
\newcommand{\E}{\mathcal E}

\newcommand{\Hom}{\operatorname{Hom}}
\newcommand{\Ext}{\operatorname{Ext}^{1}}
\newcommand{\Ob}{\operatorname{Ob}}
\newcommand{\im}{\operatorname{im}}
\newcommand{\Ch}{\operatorname{Ch}}
\newcommand{\K}{\operatorname{K}}
\newcommand{\Vect}{\operatorname{vect}}
\newcommand{\Cone}{\operatorname{Cone}}
\newcommand{\smd}{\operatorname{smd}}
\newcommand{\stalk}[2]{S^{#1}(#2)}
\newcommand{\Ho}{\operatorname{H}}
\newcommand{\stab}{\underline{\A}}
\newcommand{\sHom}{\Hom_{\stab}}
\newcommand{\ul}[1]{\underline{#1}}

\setlist[enumerate]{label=\textup{(\roman*)},leftmargin=2em,itemsep=3pt,topsep=5pt}
\title[A parity obstruction to completeness]{A parity obstruction to completeness of object cotorsion pairs}

\author{Junpeng Ren\textsuperscript{*}}
\thanks{\textsuperscript{*}Corresponding author: Junpeng Ren (\texttt{renjp@nenu.edu.cn}).}
\address{School of Mathematics and Statistics, Northeast Normal University, Changchun 130024, China}
\email{renjp@nenu.edu.cn}
\author{Yucheng Wang}
\address{School of Mathematics, Nanjing University, Nanjing 210093, Jiangsu Province, P.R. China}
\email{wangyucheng2358@163.com}
\date{September 2026}
\subjclass[2020]{18G25, 18E10, 18G80}

\keywords{Exact category, ideal cotorsion pair, object ideal, Frobenius category, idempotent completeness, $t$-structure}

\begin{document}

\begin{abstract}
	Fu, Guil Asensio, Herzog and Torrecillas asked whether a complete ideal
	cotorsion pair of object ideals in an exact category always induces a
	complete cotorsion pair of objects. We show that it need not, even in a
	Hom-finite, weakly idempotent complete Frobenius exact category. Our example
	is built from bounded complexes of finite-dimensional vector spaces with
	even total cohomology dimension. The parity obstruction manifests itself
	as a non-split idempotent in the stable category. For cotorsion pairs of
	$t$-structure type, we give an object-wise splitting criterion for special
	approximations. In this setting, a criterion of Saor\'{\i}n and
	\v{S}\v{t}ov\'{\i}\v{c}ek yields an affirmative answer whenever
	the stable category is idempotent complete.
\end{abstract}
\maketitle
\markboth{A PARITY OBSTRUCTION TO COMPLETENESS}{A PARITY OBSTRUCTION TO COMPLETENESS}

\section{Introduction}

Ideal approximation theory, developed by Fu, Guil Asensio, Herzog and Torrecillas \cite{FGHT}, replaces the approximating class of objects in classical approximation theory by an ideal of morphisms. Let $(\A,\E)$ be an exact category. For an additive class of objects $\F$, let $\langle\F\rangle$ be the ideal of morphisms that factor through an object of $\F$. For an ideal $\I$, put
\[
 \Ob(\I)=\{X\in\A:1_X\in\I\}.
\]
The ideal $\I$ is an \emph{object ideal} if $\I=\langle\Ob(\I)\rangle$. By \cite[Theorem~28]{FGHT}, a complete cotorsion pair of objects $(\F,\C)$ gives a complete ideal cotorsion pair $(\langle\F\rangle,\langle\C\rangle)$. The converse was posed as an open question.

\begin{question}[{\cite[Question~29]{FGHT}}]\label{qu:original}
Let $(\I,\J)$ be a complete ideal cotorsion pair in an exact category. If both $\I$ and $\J$ are object ideals, is the cotorsion pair $(\Ob(\I),\Ob(\J))$ complete?
\end{question}

Several partial answers are known. Sun, Wang and Zhu \cite[Theorem~3.6(2)]{SWZ} obtained an affirmative answer when the exact category has enough projective objects and enough injective objects and $\J$ is enveloping. In a Frobenius category, Sun, Tan, Wang and Zhu \cite[Corollary~1.3]{STWZ} showed that completeness of $(\Ob(\I),\Ob(\J))$ is equivalent to the existence of the relevant approximation sequences for the objects of $\smd(\Ob(\I)\oplus\Ob(\J))$, and that the answer is affirmative when the stable category is Krull--Schmidt. Zhang and Zhou \cite[Theorem~3.11 and Corollary~3.12]{ZZ} obtained an affirmative answer in Krull--Schmidt exact categories. Wang, Wang and Zhu \cite[Theorem~1 and Example~2]{WWZ} gave a negative answer. Their exact category is the full subcategory of an abelian category cut out by an inequality $\lambda(M)\ge0$, where $\lambda$ is additive on short exact sequences and vanishes on projective objects; it is not weakly idempotent complete. We show that the answer remains negative even in a Hom-finite, weakly idempotent complete Frobenius exact category.

\medskip
\noindent\textbf{The example.}
Fix a field $k$, let $\Vect_k$ be the category of finite-dimensional $k$-vector spaces, and let $\D=\Ch^b(\Vect_k)$ be the abelian category of bounded cochain complexes. For $X\in\D$ write
\[
 \beta(X)=\sum_{n\in\mathbb Z}\dim_k H^n(X)
\]
for its total cohomology dimension, and put
\[
 \A=\{X\in\D:\beta(X)\in2\mathbb Z\},
\]
with the short exact sequences of $\D$ whose terms lie in $\A$ as conflations. Define
\begin{align*}
 \F&=\{X\in\A:H^n(X)=0\text{ for all }n\ge1\},\\
 \C&=\{X\in\A:H^n(X)=0\text{ for all }n\le1\}.
\end{align*}

\begin{theorem}\label{thm:main}
The category $\A$ is a Hom-finite, weakly idempotent complete Frobenius exact category that is not idempotent complete. The pair
$
 (\langle\F\rangle,\langle\C\rangle)
$
is a complete ideal cotorsion pair of object ideals, with $\Ob(\langle\F\rangle)=\F$ and $\Ob(\langle\C\rangle)=\C$. The cotorsion pair of objects $(\F,\C)$ is neither special precovering nor special preenveloping. In fact, the object
\[
 M=\stalk{0}{k}\oplus\stalk{2}{k}\in\A
\]
has neither a special $\F$-precover nor a special $\C$-preenvelope.
\end{theorem}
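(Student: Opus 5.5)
The plan is to work throughout via cohomology, using that over a field a bounded complex is determined up to homotopy by its graded cohomology. Write $K=\K^b(\Vect_k)$ for the bounded homotopy category.

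\textbf{Step 1: $\A$ is a Frobenius exact category.} In an exact sequence of finite-dimensional spaces the sum of the dimensions is twice the sum of the ranks. Applying this to the long exact cohomology sequence of $0\to X\to Y\to Z\to0$ in $\D$ gives
\[
\beta(X)+\beta(Y)+\beta(Z)\in 2\mathbb Z.
\]
Two consequences follow.
\begin{itemize}
\item $\A$ is extension-closed in $\D$, so it inherits an exact structure.
\item Kernels of deflations and cokernels of inflations between objects of $\A$ stay in $\A$.
\end{itemize}

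The category $\D$ is Frobenius, and its projective-injectives are the contractible complexes, that is, sums of disks. These have $\beta=0$, so they lie in $\A$. The standard conflations $X\to \Cone(1_X)\to X[1]$ and $X[-1]\to\Cone(1_{X[-1]})\to X$ stay in $\A$ because shifting preserves $\beta$. Hence $\A$ is Frobenius with the same projectives. Its stable category is the full subcategory of $K$ on complexes of even $\beta$, equivalently graded spaces of even total dimension.

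Hom-finiteness is clear.

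For weak idempotent completeness, take a retraction $X\to Y$ in $\A$. Its kernel $K'$ in $\D$ satisfies $X\cong Y\oplus K'$, so $\beta(K')=\beta(X)-\beta(Y)$ is even and $K'\in\A$.

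$\A$ is not idempotent complete. The idempotent of $M$ projecting onto $\stalk0k$ cannot split in $\A$, because $\beta(\stalk0k)=1$.

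\textbf{Step 2: $(\F,\C)$ is a cotorsion pair, and the ideal statements.} First identify $\Ext_\A(X,Y)$ with $\Ext_\D(X,Y)$. This holds because middle terms are automatically in $\A$. Over a field the latter equals
\[
\Hom_{K}(X,Y[1])\cong\prod_n\Hom_k\bigl(H^n X,H^{n+1}Y\bigr).
\]
We test against the objects $\stalk nk\oplus\stalk mk$ and $\stalk nk^{2}$, which lie in $\A$. This gives $\F^{\perp}=\C$ and ${}^{\perp}\C=\F$: the test objects of $\F$ detect $H^{m}(Y)$ for every $m\le 1$, and dually.

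For $\Ob(\langle\F\rangle)=\F$: if $1_X$ factors through some $F\in\F$, then $H^*(X)$ is a retract of $H^*(F)$, so $X\in\F$. The same argument gives $\Ob(\langle\C\rangle)=\C$.

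\textbf{Step 3: completeness of the ideal pair.} Given $X\in\A$, take the truncation triangle $\tau_{\le0}X\to X\to\tau_{\ge1}X\to$ in the stable category. If $\beta(\tau_{\le0}X)$ is even, this is realized in $\A$ by a conflation, and it gives genuine special approximations. If it is odd, we repair parity by adjoining a zero summand. We use $F'=\tau_{\le0}X\oplus\stalk0k$, mapping to $X$ by the truncation map on the first summand and by $0$ on $\stalk0k$. Adding a projective cover makes this a deflation. Its kernel is $C'\oplus\stalk{1}{k}$ up to projectives, where $C'=(\tau_{\ge1}X)[-1]$; its cohomology lives in degrees $\ge2$, and $\stalk1k$ is not in $\C$.

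The point to check, and I expect it to be the main technical obstacle, is this: the inflation from the kernel is still a morphism in $\langle\C\rangle$ in the sense required by the definition of a special ideal precover in \cite{FGHT}. The reason it should work is that the component through the spurious $\stalk1k$ is null, so the map factors through an object of $\C$. This may need replacing $\stalk0k$ by $\stalk{-1}{k}$, or a similar choice of degrees, so that the parasite summand maps to zero. Special preenvelopes are handled dually.

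\textbf{Step 4: $M$ has no special approximation.} Suppose $0\to C\to F\to M\to0$ is a conflation with $F\in\F$ and $C\in\C$. Look at the long exact cohomology sequence.
\begin{itemize}
\item $H^n(F)=0$ for $n\ge1$ and $H^n(C)=0$ for $n\le1$. From the segment $H^0C=0\to H^0F\to H^0M\to H^1C=0$ we get $H^*(F)\cong H^0(M)=k$, concentrated in degree $0$.
\item From $H^2F=0\to H^2M\to H^3C\to H^3F=0$ we get $H^*(C)\cong k$, concentrated in degree $3$.
\end{itemize}
So $\beta(F)=1$, which is odd, contradicting $F\in\A$. The dual computation rules out a special $\C$-preenvelope $0\to M\to C\to F\to0$, because it would force $\beta(C)=1$. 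Hence $(\F,\C)$ is neither special precovering nor special preenveloping.
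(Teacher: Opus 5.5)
Your Steps~1 and~4 are sound. Step~4 replaces the truncation-uniqueness argument of \cref{prop:criterion} by the long exact cohomology sequence; you should add that $H^n(F)\hookrightarrow H^n(M)=0$ for $n<0$.

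The gap is in the ideal-theoretic half of the theorem. Step~2 proves only the object statements ${}^\perp\C=\F$, $\F^\perp=\C$ and $\Ob(\langle\F\rangle)=\F$. It never shows that $(\langle\F\rangle,\langle\C\rangle)$ is an ideal cotorsion pair, i.e.\ that ${}^\perp\langle\C\rangle\subseteq\langle\F\rangle$ and $\langle\F\rangle^\perp\subseteq\langle\C\rangle$ as classes of morphisms. This does not follow from the object pair, since \cite[Theorem~28]{FGHT} requires $(\F,\C)$ to be complete, which is exactly what fails here. Testing against $\stalk{n+1}{k^2}$, using the naturality of the Ext formula, shows that any $f\in{}^\perp\langle\C\rangle$ kills $H^n$ for all $n\ge1$. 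You must then factor $f$ through an object of $\F$, and the obvious intermediate object $\Ho^{\le0}(X)$ can have odd $\beta$; the idempotent of $M$ with image $\stalk{0}{k}$ is an example. The paper handles this in \cref{prop:object-ideals} by a second doubling: $f\iota_L\pi_L$ factors through $L\oplus L\in\F$, and the null-homotopic remainder factors through $\Cone(1_X)\in\F$.

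Step~3 is left open precisely at its key point, and the description is off in two places. First, the summand $\stalk{0}{k}$ that maps by zero lies in the kernel itself. So the kernel is $K_0\oplus\stalk{0}{k}$ with $K_0\cong\Ho^{\ge1}(X)[-1]$ in $\mathcal T$, not $\cdots\oplus\stalk{1}{k}$. Second, specialness does not ask that the inflation lie in $\langle\C\rangle$. It asks that the class $\eta$ of the conflation be $g_*\eta'$ for some $g\in\langle\F\rangle^\perp$.

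Your construction can be completed as follows. Let $\eta_0\in\Ext_{\D}(X,K_0)$ be the class of $0\to K_0\to\Ho^{\le0}(X)\oplus P\to X\to0$ in $\D$, with $P$ your projective. Then $\eta=(\eta_0,0)$ has zero component in $\Ext_{\A}(X,\stalk{0}{k})$. Hence $\eta$ is the pushout of $(\eta_0,0)\in\Ext_{\A}(X,K_0\oplus K_0)$ along the map $K_0\oplus K_0\to K_0\oplus\stalk{0}{k}$ that is the identity on the first summand and zero on the second. This map factors through $K_0\oplus K_0\in\C$; the doubling is needed because $\beta(K_0)$ is odd. So the map lies in $\langle\C\rangle\subseteq\langle\F\rangle^\perp$. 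With the dual preenvelopes, the identity $f^*\eta=\Ext_{\A}(f,g)(\eta')$ then also supplies the inclusions missing from Step~2.

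The paper instead puts the parity correction into the middle term, $E_A=L\oplus P(U)\oplus U[-1]\oplus Q$, whose kernel is $U[-1]\oplus U[-1]\in\C$. Then \cref{lem:object-special} makes specialness automatic, at the price of needing \cref{prop:object-ideals} to see that $p_A\in\langle\F\rangle$.
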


The category $\A$ omits certain direct summands, such as $\stalk{0}{k}$, while retaining their doubles. Thus the obstruction is the absence of idempotent summands, not the absence of kernels of retractions (\cref{rem:truncation}).

By contrast, a Hom-finite idempotent complete $k$-linear category is Krull--Schmidt, since finite-dimensional algebras are semiperfect \cite[Corollary~4.4]{Krause}, so the affirmative result \cite[Corollary~3.12]{ZZ} applies. Thus, among Hom-finite $k$-linear exact categories, weak idempotent completeness does not suffice for a positive answer to \cref{qu:original}, whereas idempotent completeness does.

\medskip
\noindent\textbf{Pairs of $t$-structure type.}
Let $(\A,\E)$ be a Frobenius exact category with stable category $\stab$ and suspension $\Sigma$. We say that a cotorsion pair $(\F,\C)$ in $\A$ is \emph{of $t$-structure type} if $\sHom(F,C)=0$ for all $F\in\F$ and $C\in\C$, equivalently $\Sigma\F\subseteq\F$ (\cref{lem:t-type}); if such a pair is complete, then $(\F,\Sigma\C)$ is a $t$-structure on $\stab$. The pair in \cref{thm:main} is of this type (\cref{rem:example-t-type}). When $\stab$ is idempotent complete, a criterion of Saor\'{\i}n and \v{S}\v{t}ov\'{\i}\v{c}ek \cite[Proposition~3.11]{SS} shows that the answer to \cref{qu:original} is affirmative for such pairs; it even suffices that $\I$ be precovering (\cref{prop:SS}). Our example shows that this hypothesis cannot be dropped: in its stable category, $\F$ is precovering and closed under extensions, direct summands and $\Sigma$, but it is not the aisle of a $t$-structure (see \cref{sec:failure,lem:stable-retract,lem:t-type,lem:stable-approx}). We also give a direct argument that works object by object and shows that the obstruction is an idempotent of $\stab$ that does not split (\cref{prop:t-type-positive,rem:example-t-type}).

\medskip
\noindent\textbf{Organisation.}
\Cref{sec:prelim} recalls ideal cotorsion pairs and the criterion for special ideal approximations. \Cref{sec:construction} constructs the Frobenius category, computes its extension groups, and establishes the complete ideal cotorsion pair. \Cref{sec:failure} proves the parity obstruction and \cref{thm:main}, and relates the example to known positive results and to the standard $t$-structure. \Cref{sec:positive} treats cotorsion pairs of $t$-structure type: it deduces the affirmative answer from \cite[Proposition~3.11]{SS}, gives a direct object-wise argument, identifies the non-split idempotent in our example, and discusses the remaining open case.

\section{Preliminaries}\label{sec:prelim}

Throughout this section $(\A,\E)$ is an arbitrary exact category in the sense of Quillen; see \cite{Buhler}. We write $\Ext_{\A}$ for the Yoneda extension bifunctor.

An \emph{ideal} of $\A$ is an additive subbifunctor $\I$ of $\Hom_{\A}(-,-)$; equivalently, a class of morphisms closed under addition and under composition with arbitrary morphisms on either side. For morphisms $f:X_0\to X_1$ and $g:Y_0\to Y_1$, the map
\[
 \Ext_{\A}(f,g):\Ext_{\A}(X_1,Y_0)\longrightarrow\Ext_{\A}(X_0,Y_1)
\]
is pullback along $f$ followed by pushout along $g$ (the order does not matter). For ideals $\I$ and $\J$ put
\[
 {}^\perp\J=\{f:\Ext_{\A}(f,g)=0\text{ for all }g\in\J\},\qquad
 \I^\perp=\{g:\Ext_{\A}(f,g)=0\text{ for all }f\in\I\}.
\]
The pair $(\I,\J)$ is an \emph{ideal cotorsion pair} if $\I={}^\perp\J$ and $\J=\I^\perp$. For classes of objects, $(\F,\C)$ is a \emph{cotorsion pair} if $\F={}^\perp\C$ and $\C=\F^\perp$, where orthogonality means vanishing of the extension groups.

For a cotorsion pair of objects $(\F,\C)$, a \emph{special $\F$-precover} of $A$ is a conflation $0\to C\to F\to A\to0$ with $F\in\F$ and $C\in\C$, and a \emph{special $\C$-preenvelope} of $A$ is a conflation $0\to A\to C\to F\to0$ with $C\in\C$ and $F\in\F$. The pair is \emph{complete} if every object has both.

\begin{definition}[{\cite{FGHT}}]
Let $\I$ be an ideal and $A\in\A$. An \emph{$\I$-precover} of $A$ is a morphism $p:E\to A$ in $\I$ such that every morphism in $\I$ with codomain $A$ factors through $p$. It is \emph{special} if it is obtained as the pushout of a conflation along a morphism in $\I^\perp$:
\[
\begin{tikzcd}[column sep=large]
0\ar[r]&Y\ar[r]\ar[d,"g"']&Z\ar[r]\ar[d]&A\ar[r]\ar[d,equal]&0\\
0\ar[r]&B\ar[r]&E\ar[r,"p"']&A\ar[r]&0
\end{tikzcd}
\qquad g\in\I^\perp.
\]
Special $\J$-preenvelopes are defined dually, as pullbacks of conflations along morphisms in ${}^\perp\J$. An ideal cotorsion pair $(\I,\J)$ is \emph{complete} if every object has a special $\I$-precover and a special $\J$-preenvelope.
\end{definition}

The following criterion is all we need to produce special ideal approximations; compare \cite[Proposition~11 and the discussion before Proposition~25]{FGHT}, where such approximations are called object-special.

\begin{lemma}\label{lem:object-special}
Let $(\I,\J)$ be an ideal cotorsion pair.
\begin{enumerate}
\item If $0\to C\to E\xrightarrow{p}A\to0$ is a conflation with $p\in\I$ and $1_C\in\J$, then $p$ is a special $\I$-precover of $A$.
\item If $0\to A\xrightarrow{j}D\to F\to0$ is a conflation with $j\in\J$ and $1_F\in\I$, then $j$ is a special $\J$-preenvelope of $A$.
\end{enumerate}
\end{lemma}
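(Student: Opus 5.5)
We treat part (i); part (ii) is the formal dual and follows by the same argument in $\A^{\mathrm{op}}$. We must check two things about $p$. It is an $\I$-precover, so every $f\colon X\to A$ in $\I$ factors through $p$. It is special, so it arises as a pushout of a conflation along a morphism of $\I^\perp=\J$.

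Speciality is immediate. Take the given conflation $0\to C\to E\xrightarrow{p}A\to0$ itself as the top row and push it out along $g=1_C$. Since $1_C\in\J=\I^\perp$, this exhibits $p$ as special in the sense of the definition.

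For the precover property, let $f\colon X\to A$ lie in $\I$ and write $\eta\in\Ext_{\A}(A,C)$ for the class of the conflation. Pull $\eta$ back along $f$ and push it out along $1_C$. The result is $\Ext_{\A}(f,1_C)(\eta)$, which vanishes because $f\in\I={}^\perp\J$ and $1_C\in\J$. So $f^*\eta=0$, meaning the pullback conflation $0\to C\to E\times_A X\to X\to0$ splits. A section $s$ of it, composed with the canonical map $E\times_A X\to E$, gives $h\colon X\to E$ with $ph=f$. This step uses the standard fact that in an exact category a pulled-back conflation is split exactly when the map factors through the deflation; it follows from the universal property of the pullback (see \cite{Buhler}).

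Finally, $p\in\I$ holds by hypothesis, so $p$ is a special $\I$-precover. The argument is routine and has no real obstacle. The one point to state carefully is that $\Ext_{\A}(f,1_C)$ is just the pullback map $f^*$. Dually, in (ii), for $g\colon A\to Y$ in $\J$ we have $\Ext_{\A}(1_F,g)(\xi)=g_*\xi=0$, so $g$ extends along $j$. The definition of a special $\J$-preenvelope is then met by pulling back along $1_F\in\I={}^\perp\J$.
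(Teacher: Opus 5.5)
Your proposal is correct and follows essentially the same route as the paper. The vanishing of $f^*\eta=\Ext_{\A}(f,1_C)(\eta)$ splits the pulled-back conflation, so $f$ factors through $p$, and the conflation is its own pushout along $1_C\in\I^\perp$; part (ii) is handled dually via $g_*\eta=\Ext_{\A}(1_F,g)(\eta)=0$.
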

\begin{proof}
(i) Let $\eta\in\Ext_{\A}(A,C)$ be the class of the conflation. Since $1_C\in\J=\I^\perp$, we have $i^*\eta=0$ for every $i:X\to A$ in $\I$. Hence the pullback of $\eta$ along $i$ splits, that is, $i$ factors through $p$. Since $p\in\I$, it is an $\I$-precover. It is special because the conflation is its own pushout along $1_C\in\I^\perp$.

(ii) Dually, for $g:A\to Y$ in $\J$ we have $g_*\eta=\Ext_{\A}(1_F,g)(\eta)=0$ because $1_F\in\I={}^\perp\J$, so $g$ factors through $j$; and the conflation is its own pullback along $1_F\in{}^\perp\J$.
\end{proof}

The following observation is contained in \cite[Lemma~3.5]{SWZ}.

\begin{lemma}\label{lem:object-pair}
An ideal cotorsion pair $(\I,\J)$ of object ideals induces the cotorsion pair of objects $(\Ob(\I),\Ob(\J))$.
\end{lemma}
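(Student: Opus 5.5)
The claim is that $(\Ob(\I),\Ob(\J))$ is a cotorsion pair, that is, $\Ob(\I)={}^\perp\Ob(\J)$ and $\Ob(\J)=\Ob(\I)^\perp$. The plan is to translate between orthogonality of identity morphisms and orthogonality of ideals, using that each ideal is generated by its objects. The two equalities are dual, so I would prove the first in detail and obtain the second by the symmetric argument.

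\emph{Step 1: orthogonality of the objects.} Let $X\in\Ob(\I)$ and $Y\in\Ob(\J)$. Then $1_X\in\I$ and $1_Y\in\J=\I^\perp$, so $\Ext_{\A}(1_X,1_Y)=0$. This map is the identity of $\Ext_{\A}(X,Y)$, hence $\Ext_{\A}(X,Y)=0$. This gives $\Ob(\I)\subseteq{}^\perp\Ob(\J)$ and, symmetrically, $\Ob(\J)\subseteq\Ob(\I)^\perp$.

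\emph{Step 2: the reverse inclusion.} Let $X\in{}^\perp\Ob(\J)$. We must show $1_X\in\I={}^\perp\J$, i.e.\ $\Ext_{\A}(1_X,g)=0$ for every $g\colon Y_0\to Y_1$ in $\J$. Since $\J$ is an object ideal, $g=b\circ a$ with $a\colon Y_0\to Y$, $b\colon Y\to Y_1$ and $Y\in\Ob(\J)$. By functoriality,
\[
\Ext_{\A}(1_X,g)=\Ext_{\A}(1_X,b)\circ\Ext_{\A}(1_X,a),
\]
and this factors through $\Ext_{\A}(X,Y)=0$. Hence $1_X\in\I$, so $X\in\Ob(\I)$. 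The dual argument, using that $\I=\langle\Ob(\I)\rangle$, gives $\Ob(\I)^\perp\subseteq\Ob(\J)$.

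The only step needing care is this factorization through the middle object. It is also where the object-ideal hypothesis is used: here for $\J$, and in the dual argument for $\I$. Finally, I would note that $\Ob(\I)$ and $\Ob(\J)$ are classes closed under isomorphisms, so they are legitimate classes for a cotorsion pair.
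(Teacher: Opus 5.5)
Your proof is correct and matches the paper's argument: you get $\Ob(\I)\subseteq{}^\perp\Ob(\J)$ because $\Ext_{\A}(1_X,1_Y)$ is the identity of $\Ext_{\A}(X,Y)$ and vanishes. For the reverse inclusion you factor each $g\in\J=\langle\Ob(\J)\rangle$ through an object $Y\in\Ob(\J)$, so that $\Ext_{\A}(1_X,g)$ factors through $\Ext_{\A}(X,Y)=0$, and the second equality follows dually.
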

\begin{proof}
Write $\F=\Ob(\I)$ and $\C=\Ob(\J)$. For $F\in\F$ and $C\in\C$, the map $\Ext_{\A}(1_F,1_C)$ is the identity of $\Ext_{\A}(F,C)$ and is zero, so $\F\subseteq{}^\perp\C$. Conversely, let $X\in{}^\perp\C$. If $g=ba$ factors through $C\in\C$, then $\Ext_{\A}(1_X,g)=\Ext_{\A}(1_X,b)\Ext_{\A}(1_X,a)$ factors through $\Ext_{\A}(X,C)=0$. Hence $1_X\in{}^\perp\langle\C\rangle={}^\perp\J=\I$, and $X\in\F$. The dual statement is proved in the same way.
\end{proof}

\section{The construction}\label{sec:construction}

\subsection{The Frobenius category}\label{sec:category}

\begin{notation}\label{not:conventions}
Let $\D=\Ch^b(\Vect_k)$ and let $\K=\K^b(\Vect_k)$ be its homotopy category. Every short exact sequence in $\D$ is degreewise split.
\begin{itemize}[leftmargin=1.5em]
\item For $V\in\Vect_k$ and $r\in\mathbb Z$, $\stalk{r}{V}$ is the stalk complex with $V$ in degree $r$.
\item For $s\in\mathbb Z$, the shift is $X[s]^n=X^{n+s}$ with $d_{X[s]}=(-1)^sd_X$. Thus $H^n(X[s])=H^{n+s}(X)$ and $X[s][t]=X[s+t]$.
\item $\Ho(X)$ is the complex with $\Ho(X)^n=H^n(X)$ and zero differential. For $a\in\mathbb Z$, $\Ho^{\le a}(X)$ and $\Ho^{\ge a}(X)$ are the subcomplexes of $\Ho(X)$ concentrated in degrees $\le a$ and $\ge a$, respectively.
\item For a chain map $f:X\to Y$, the cone is $\Cone(f)^n=Y^n\oplus X^{n+1}$ with $d(y,x)=(d_Yy+f(x),-d_Xx)$.
\item A complex $X$ is \emph{contractible} if $1_X$ is null-homotopic.
\item $\beta(X)=\sum_n\dim_kH^n(X)$ and $\chi(X)=\sum_n(-1)^n\dim_kH^n(X)$.
\end{itemize}
\end{notation}

\begin{lemma}\label{lem:euler}
For $X\in\D$,
\[
 \chi(X)=\sum_n(-1)^n\dim_kX^n,\qquad \chi(X)\equiv\beta(X)\pmod 2,\qquad \chi(X[s])=(-1)^s\chi(X).
\]
Moreover $\chi$ is additive on short exact sequences in $\D$. Consequently, $X\in\A$ if and only if $\chi(X)$ is even.
\end{lemma}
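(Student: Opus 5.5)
The plan is to prove the four assertions in sequence. The first identity (Euler characteristic computed from the terms) comes from the standard rank count on a bounded complex of finite-dimensional spaces. Write $Z^n=\ker d^n$ and $B^n=\im d^{n-1}$. Rank–nullity gives $\dim X^n=\dim Z^n+\dim B^{n+1}$, and by definition $\dim H^n(X)=\dim Z^n-\dim B^n$. Taking the alternating sum over $n$, the contributions of the $B$'s telescope:
\[
\sum_n(-1)^n\dim X^n=\sum_n(-1)^n\dim Z^n+\sum_n(-1)^n\dim B^{n+1}
\]
and
\[
\sum_n(-1)^n\dim H^n(X)=\sum_n(-1)^n\dim Z^n-\sum_n(-1)^n\dim B^n .
\]
After reindexing, $\sum_n(-1)^n\dim B^{n+1}=-\sum_n(-1)^n\dim B^n$, so the two expressions agree. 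All sums are finite because $X$ is bounded and its terms are finite-dimensional.

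The parity statement is immediate: $(-1)^n\equiv 1\pmod 2$, so
\[
\chi(X)=\sum_n(-1)^n\dim H^n(X)\equiv\sum_n\dim H^n(X)=\beta(X)\pmod 2 .
\]
For the shift, use $H^n(X[s])=H^{n+s}(X)$ and substitute $m=n+s$:
\[
\chi(X[s])=\sum_m(-1)^{m-s}\dim H^m(X)=(-1)^s\chi(X).
\]

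Additivity follows from the term-wise formula. A short exact sequence $0\to X\to Y\to Z\to 0$ in $\D$ is exact in each degree, so $\dim Y^n=\dim X^n+\dim Z^n$ for every $n$. Taking alternating sums gives $\chi(Y)=\chi(X)+\chi(Z)$. An alternative route is the long exact cohomology sequence, whose alternating dimension sum vanishes, but the degreewise argument is simpler. Finally, $X\in\A$ means by definition that $\beta(X)$ is even, and by the parity congruence this is the same as $\chi(X)$ being even.

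None of these steps is a real obstacle. The only point that needs care is the index bookkeeping in the telescoping sum, which is where I would check the signs explicitly.
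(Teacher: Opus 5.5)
Your proposal is correct and follows essentially the same route as the paper. Both use the kernel/image dimension count $\dim_k X^n=\dim_k Z^n+\dim_k B^{n+1}$ with the $B^n$ terms telescoping in the alternating sum, parity from $(-1)^n\equiv1\pmod 2$, reindexing for the shift, and degreewise additivity of dimensions for short exact sequences.
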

\begin{proof}
Put $Z^n=\ker d_X^n$ and $B^n=\im d_X^{n-1}$. Since $d_X^nd_X^{n-1}=0$, we have $B^n\subseteq Z^n$ and $H^n(X)=Z^n/B^n$. The short exact sequences
\begin{gather*}
 0\longrightarrow Z^n\longrightarrow X^n\xrightarrow{d_X^n}B^{n+1}\longrightarrow0, \ \ \
 0\longrightarrow B^n\longrightarrow Z^n\longrightarrow H^n(X)\longrightarrow0
\end{gather*}
give
\[
 \dim_k X^n=\dim_k Z^n+\dim_k B^{n+1}
 =\dim_k H^n(X)+\dim_k B^n+\dim_k B^{n+1}.
\]
All sums are finite because $X$ is bounded, so
\begin{align*}
 \sum_n(-1)^n\dim_k X^n
 &=\sum_n(-1)^n\bigl(\dim_k H^n(X)+\dim_k B^n+\dim_k B^{n+1}\bigr)\\
 &=\sum_n(-1)^n\dim_k H^n(X)=\chi(X).
\end{align*}
The congruence follows from $(-1)^n\equiv1\pmod2$. For the shift formula, reindexing by $m=n+s$ gives
\[
 \chi(X[s])=\sum_n(-1)^n\dim_k H^{n+s}(X)
 =(-1)^s\chi(X).
\]
Finally, a short exact sequence $0\to X\to Y\to Z\to0$ in $\D$ gives $\dim_k Y^n=\dim_k X^n+\dim_k Z^n$ in each degree. Taking alternating sums proves $\chi(Y)=\chi(X)+\chi(Z)$.
\end{proof}

\begin{proposition}\label{prop:category}
The full subcategory $\A$ of $\D$, with the short exact sequences of $\D$ whose terms lie in $\A$ as conflations, is an essentially small, Hom-finite exact $k$-category. It is weakly idempotent complete but not idempotent complete.
\end{proposition}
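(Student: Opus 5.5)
We prove the four assertions in turn, relying throughout on \cref{lem:euler}: membership in $\A$ is detected by the parity of $\chi$, and $\chi$ is additive on short exact sequences of $\D$.

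\emph{Smallness and Hom-finiteness.} Every object of $\D$ is isomorphic to a bounded complex whose terms are $k^{n}$, so $\D$, and hence $\A$, is essentially small. For $X,Y\in\D$, the space $\Hom_{\D}(X,Y)$ is a subspace of $\prod_n\Hom_k(X^n,Y^n)$. Only finitely many factors are nonzero and each is finite-dimensional, so $\A$ is Hom-finite and $k$-linear.

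\emph{Exactness.} We show that $\A$ is closed under extensions in the abelian category $\D$. If $0\to X\to Y\to Z\to0$ is exact in $\D$ with $X,Z\in\A$, then $\chi(Y)=\chi(X)+\chi(Z)$ is even, so $Y\in\A$. It is a standard fact (see \cite{Buhler}) that an extension-closed full subcategory of an abelian category, with the short exact sequences having all terms in it as conflations, is an exact category. We also note the two-out-of-three property: if two terms of a short exact sequence in $\D$ lie in $\A$, so does the third. This will be used next.

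\emph{Weak idempotent completeness.} It suffices to show that every retraction $p:Y\to Z$ in $\A$ has a kernel in $\A$ and is a deflation. Let $s$ be a section with $ps=1_Z$. In $\D$ we have a split exact sequence $0\to K\to Y\to Z\to0$ with $K=\ker p$, so $Y\cong K\oplus Z$. Then $\chi(K)=\chi(Y)-\chi(Z)$ is even, so $K\in\A$, and the sequence is a conflation. Thus retractions have kernels and are deflations. Dually, sections have cokernels and are inflations. This is the definition of weak idempotent completeness in \cite{Buhler}.

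\emph{Failure of idempotent completeness.} Take $X=\stalk{0}{k}\oplus\stalk{0}{k}=\stalk{0}{k^2}$, which lies in $\A$ since $\beta(X)=2$. Let $e$ be the idempotent projecting onto the first summand. Suppose $e$ split in $\A$, say $e=\iota\pi$ with $\pi\iota=1_W$ and $W\in\A$. Then, viewed in $\D$, the object $W$ is a retract of $X$ with image $\im e$. Because $\D$ is abelian, hence idempotent complete, $W\cong\im e\cong\stalk{0}{k}$ in $\D$. But $\beta(\stalk{0}{k})=1$ is odd, so $W\notin\A$, a contradiction.

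The only point needing care is the weak idempotent completeness step. One must check that the kernel computed in $\D$ is actually a kernel in $\A$; this holds automatically because $\A$ is a full subcategory and $K\in\A$. One must also match the definition used for exact categories, and we invoke Bühler's characterization that retractions have kernels. Everything else reduces to additivity of $\chi$.
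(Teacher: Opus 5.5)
Your proof is correct and follows essentially the same route as the paper. Extension-closure comes from additivity of $\chi$, and weak idempotent completeness from the fact that the kernel of a retraction has even $\chi$; the paper checks the dual statement, that the cokernel of a section has even $\chi$. Failure of idempotent completeness comes from the projection on $\stalk{0}{k^2}$, whose would-be image $\stalk{0}{k}$ has $\beta=1$.
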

\begin{proof}
By \cref{lem:euler}, $\A$ contains $0$, is closed under finite direct sums, and is closed under extensions in the abelian category $\D$. An extension-closed full subcategory of an abelian category is exact with the induced conflations; see \cite{Buhler}. Essential smallness and Hom-finiteness are inherited from $\D$.

Let $i:X\to Y$ be a split monomorphism in $\A$. Its cokernel $Z$ in $\D$ satisfies $Y\cong X\oplus Z$, so $\chi(Z)=\chi(Y)-\chi(X)$ is even and $Z\in\A$. Hence $i$ has a cokernel in $\A$, and $\A$ is weakly idempotent complete.

Let $S=\stalk{0}{k^2}\in\A$ and let $e:S\to S$ be the projection $(x,y)\mapsto(x,0)$ in degree $0$. If $e$ split in $\A$, there would be an object $B\in\A$ and chain maps
\[
 B\xrightarrow{i}S\xrightarrow{p}B,
 \qquad pi=1_B,\quad ip=e.
\]
The equality $pi=1_B$ makes $i$ degreewise injective, while $ei=i$ and $e=ip$ give $\im i=\im e$. Hence, in $\D$,
\[
 B\cong\im e=\stalk{0}{k\oplus0}\cong\stalk{0}{k}.
\]
Thus $\beta(B)=1$, contradicting $B\in\A$. Therefore $\A$ is not idempotent complete.
\end{proof}

For $U\in\D$, write $\iota_U(a)=(a,0)$ and $\pi_U(a,b)=b$, and put
\[
 P(U)=\Cone(1_{U[-1]}),\qquad q_U=\pi_{U[-1]}:P(U)\longrightarrow U[-1][1]=U.
\]
With these conventions, we have short exact sequences
\begin{equation}\label{eq:cones}
 0\to U\xrightarrow{\iota_U}\Cone(1_U)\xrightarrow{\pi_U}U[1]\to0,\qquad
 0\to U[-1]\to P(U)\xrightarrow{q_U}U\to0.
\end{equation}
The cone $\Cone(1_U)$ is contractible, with contracting homotopy $h^n(a,b)=(0,a)$, and the same holds for $P(U)$. Thus the middle terms of \eqref{eq:cones} have zero cohomology and lie in $\A$, even when $U\notin\A$.

\begin{proposition}\label{prop:frobenius}
The exact category $\A$ is Frobenius, and its projective objects and injective objects are exactly the contractible complexes in $\A$.
\end{proposition}
\begin{proof}
For a contractible complex $X$, a contracting homotopy splits each sequence $0\to Z^n(X)\to X^n\xrightarrow{d_X^n}Z^{n+1}(X)\to0$. Thus $X$ is a finite direct sum of two-term complexes $V\xrightarrow{1_V}V$. For such a complex in degrees $n,n+1$, chain maps from it to $Y$ correspond to linear maps $V\to Y^n$, and chain maps from $Y$ to it correspond to linear maps $Y^{n+1}\to V$. These lift or extend along degreewise split sequences, so contractible complexes are projective and injective in $\D$. Since the conflations of $\A$ are short exact sequences in $\D$, they are projective and injective in $\A$. If $X\in\A$, then $X[\pm1]\in\A$ by \cref{lem:euler}, so the sequences \eqref{eq:cones} with $U=X$ are conflations in $\A$ with contractible middle terms. Hence $\A$ has enough projectives and enough injectives. If $X$ is projective, the deflation $q_X:P(X)\to X$ splits, so $X$ is a direct summand of a contractible complex and is itself contractible: a contracting homotopy induces one on $X$ by composing with the inclusion and retraction. Dually, an injective object is contractible, using $\iota_X$.
\end{proof}

\subsection{Cohomology and extensions}\label{sec:ext}

We first record two standard facts about bounded complexes of finite-dimensional vector spaces.

\begin{lemma}\label{lem:splitting}
Every $X\in\D$ admits an isomorphism of complexes $\Phi_X:X\xrightarrow{\sim}\Ho(X)\oplus Q_X$ with $Q_X$ contractible, such that $H^n(\Phi_X)$ is the canonical identification $H^n(X)=H^n(\Ho(X)\oplus Q_X)$ for every $n$. Moreover $\chi(Q_X)=0$, so if $X\in\A$ then $\Ho(X)$ and $Q_X$ lie in $\A$.
\end{lemma}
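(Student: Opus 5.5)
We build the splitting degreewise from linear-algebra choices. Use the notation of \cref{lem:euler}, with $Z^n=\ker d_X^n$ and $B^n=\im d_X^{n-1}$. Since $X$ is bounded and each $X^n$ is finite-dimensional, we may choose, for each $n$, a complement $C^n$ with $X^n=Z^n\oplus C^n$, and a complement $L^n$ with $Z^n=B^n\oplus L^n$. Then $X^n=L^n\oplus B^n\oplus C^n$. The differential kills $L^n$ and $B^n$, and it maps $C^n$ isomorphically onto $B^{n+1}$, because $d_X^n$ is injective on a complement of its kernel with image $B^{n+1}$.

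Thus $X$ decomposes as a direct sum of subcomplexes. The first is $L=\bigoplus_n L^n$ with zero differential. The others are the two-term complexes $C^n\xrightarrow{d}B^{n+1}$, one for each $n$. Each two-term complex is isomorphic to $V\xrightarrow{1_V}V$ and is therefore contractible. Let $Q_X$ be their direct sum, which is again contractible.

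The composite $L^n\hookrightarrow Z^n\twoheadrightarrow H^n(X)$ is an isomorphism, so $L\cong\Ho(X)$ as complexes with zero differential. Define $\Phi_X$ as the inverse of the resulting isomorphism $\Ho(X)\oplus Q_X\to X$, which sends $([z],q)$ to $\ell(z)+q$, where $\ell(z)$ is the $L^n$-component of $z$. For the compatibility claim, note that $H^n(Q_X)=0$. Also, a cocycle $z\in Z^n$ differs from its $L^n$-component by an element of $B^n$, hence represents the same class. So $H^n(\Phi_X)$ sends $[z]$ to $([z],0)$, which is the canonical identification. This bookkeeping is the only point needing care, and it is routine.

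Finally, $Q_X$ has zero cohomology, so $\chi(Q_X)=0$. Alternatively, each piece $V\to V$ contributes $\dim V-\dim V=0$ to the alternating sum of \cref{lem:euler}. Hence $\chi(\Ho(X))=\chi(X)-\chi(Q_X)=\chi(X)$. If $X\in\A$, this number is even, and so is $\chi(Q_X)=0$. Both $\Ho(X)$ and $Q_X$ then lie in $\A$ by \cref{lem:euler}.
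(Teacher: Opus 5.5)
Your proof is correct and follows the paper's argument. You make the same choice of complements $Z^n=B^n\oplus L^n$ and $X^n=Z^n\oplus C^n$, build $Q_X$ from the pieces $C^n\xrightarrow{\sim}B^{n+1}$, and get $\chi(Q_X)=0$ from acyclicity. The one cosmetic difference is that you check contractibility by recognising each piece as $V\xrightarrow{1_V}V$, whereas the paper writes down the explicit homotopy $s^n(b,t)=(0,(\delta^{n-1})^{-1}b)$.
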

\begin{proof}
Put $Z^n=\ker d_X^n$ and $B^n=\im d_X^{n-1}$, and choose complements $Z^n=B^n\oplus L^n$ and $X^n=Z^n\oplus T^n$. The quotient map restricts to an isomorphism $L^n\cong H^n(X)$, and $\delta^n=d_X^n|_{T^n}:T^n\to B^{n+1}$ is an isomorphism. In the decomposition $X^n=B^n\oplus L^n\oplus T^n$ the differential is $(b,\ell,t)\mapsto(\delta^nt,0,0)$. Let $Q_X$ be the complex $Q_X^n=B^n\oplus T^n$ with $d(b,t)=(\delta^nt,0)$. Then
\[
 \Phi_X^n(b,\ell,t)=\bigl([\ell],(b,t)\bigr)
\]
is an isomorphism of complexes $X\to\Ho(X)\oplus Q_X$ that induces the canonical identification on cohomology. The maps $s^n(b,t)=(0,(\delta^{n-1})^{-1}b)$ satisfy $ds+sd=1_{Q_X}$, so $Q_X$ is contractible. Finally, $H(Q_X)=0$, so $\chi(Q_X)=0$, and $\chi(\Ho(X))=\chi(X)$.
\end{proof}

\begin{lemma}\label{lem:homotopy}
For $X,Y\in\D$, the map
\[
 \Hom_{\K}(X,Y)\longrightarrow\prod_n\Hom_k\bigl(H^n(X),H^n(Y)\bigr),\qquad [f]\longmapsto\bigl(H^n(f)\bigr)_n,
\]
is an isomorphism. In particular, a chain map inducing zero on all cohomology groups is null-homotopic, and hence factors through $\Cone(1_X)$.
\end{lemma}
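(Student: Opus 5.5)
The plan is to reduce to complexes with zero differential using \cref{lem:splitting}. Since $\Phi_X$ and $\Phi_Y$ are isomorphisms of complexes that induce the canonical identifications on cohomology, and since the map $[f]\mapsto(H^n(f))_n$ is natural in both variables, it suffices to prove the claim after replacing $X$ by $\Ho(X)\oplus Q_X$ and $Y$ by $\Ho(Y)\oplus Q_Y$. A contractible complex is a zero object in $\K$ and has zero cohomology. Therefore both sides of the map are unchanged if we drop $Q_X$ and $Q_Y$, and we are reduced to the case where $X=\Ho(X)$ and $Y=\Ho(Y)$ both have zero differential.

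In this case, a chain map $f:X\to Y$ is simply a family of linear maps $f^n:X^n\to Y^n$, and $H^n(f)=f^n$. Hence $\Hom_{\D}(X,Y)\to\prod_n\Hom_k(H^n(X),H^n(Y))$ is already bijective. The product is finite because only finitely many $n$ contribute. It remains to check that homotopies vanish here. If $f=ds+sd$ with $d_X=0$ and $d_Y=0$, then $f=0$. So the homotopy relation on $\Hom_{\D}(X,Y)$ is trivial, and the map from $\Hom_{\K}(X,Y)$ is an isomorphism. Well-definedness in general (homotopic maps induce the same map on cohomology) is standard.

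For the final assertion, suppose $H^n(f)=0$ for all $n$. By injectivity, $f$ is null-homotopic, say $f=d_Yh+hd_X$. Define $\Cone(1_X)\to Y$ by $(a,b)\mapsto h(a)+f(b)$, using the degree conventions $\Cone(1_X)^n=X^n\oplus X^{n+1}$. A short computation with the cone differential $d(a,b)=(d_Xa+b,-d_Xb)$ should show that this is a chain map whose composite with $\iota_X$ is $f$. I will fix the signs during that check, and I may need to swap the roles of the two components; this sign bookkeeping is the only mildly delicate step. The reduction via \cref{lem:splitting} is the main point and needs only naturality, which is immediate since $\Phi$ is a chain isomorphism compatible with cohomology.
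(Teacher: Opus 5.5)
Your argument is the paper's proof: conjugate by $\Phi_X,\Phi_Y$ from \cref{lem:splitting}, discard the contractible summands, use that homotopies vanish and $H^n(f)=f^n$ when the differentials are zero, and then factor a null-homotopic map through $\iota_X:X\to\Cone(1_X)$. The swap you anticipated is needed: your first formula $(a,b)\mapsto h(a)+f(b)$ does not respect degrees, whereas $\varphi^n(a,b)=f^n(a)+h^{n+1}(b)$ is a chain map precisely because $f=d_Yh+hd_X$, and it satisfies $\varphi\iota_X=f$.
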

\begin{proof}
The map is well defined and additive. By \cref{lem:splitting}, conjugating with $\Phi_X$ and $\Phi_Y$ reduces the claim to $X=\Ho(X)\oplus Q_X$ and $Y=\Ho(Y)\oplus Q_Y$, since $\Phi_X,\Phi_Y$ induce the canonical identifications on cohomology. The contractible summands are zero objects of $\K$ and have zero cohomology, so we may further assume that $X$ and $Y$ have zero differential. Then chain maps are arbitrary families of linear maps $X^n\to Y^n$, every homotopy $h$ gives $d_Yh+hd_X=0$, and $H^n(f)=f^n$. For the final assertion, if $r=d_Yh+hd_X$, then $r=\varphi\iota_X$, where the chain map $\varphi:\Cone(1_X)\to Y$ is given by $\varphi^n(a,b)=r^n(a)+h^{n+1}(b)$.
\end{proof}

Let $\omega$ be the class of contractible complexes. By \cref{prop:frobenius,lem:homotopy}, morphisms factoring through $\omega$ are exactly the null-homotopic maps. Thus the inclusion $\A\subseteq\D$ identifies its stable category with the full subcategory
\begin{equation}\label{eq:stable-category}
 \underline{\A}=\A/\omega\simeq\mathcal S
 :=\{X\in\mathcal T:\beta(X)\text{ is even}\},
 \qquad \mathcal T=D^b(\Vect_k)\simeq\K.
\end{equation}
Here $\K\simeq D^b(\Vect_k)$ because every acyclic complex in $\D$ is contractible by \cref{lem:splitting}. The equivalence in \eqref{eq:stable-category} is triangulated: the cone sequences \eqref{eq:cones} identify the stable suspension with $[1]$, and conflations induce the usual triangles in $\mathcal T$; see \cite[Chapter~I, Theorem~2.6 and Lemmas~2.7--2.8]{Happel}. We use the same notation for an object of $\A$ and its image in $\mathcal T$.

\begin{proposition}\label{prop:ext}
For $X,Y\in\A$ there are isomorphisms of $k$-vector spaces, natural in both variables,
\begin{equation}\label{eq:ext-formula}
 \Ext_{\A}(X,Y)\cong\Hom_{\K}(X,Y[1])\cong\bigoplus_{n\in\mathbb Z}\Hom_k\bigl(H^n(X),H^{n+1}(Y)\bigr).
\end{equation}
Under these isomorphisms, for $f:X_0\to X_1$ and $g:Y_0\to Y_1$ in $\A$, the map $\Ext_{\A}(f,g)$ sends a family $(u_n)_n$ to
\begin{equation}\label{eq:ext-action}
 \bigl(H^{n+1}(g)\,u_n\,H^n(f)\bigr)_n.
\end{equation}
\end{proposition}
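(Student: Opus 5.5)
The plan is to compute $\Ext_{\A}$ by dimension shifting along the cone conflation $0\to Y\xrightarrow{\iota_Y}\Cone(1_Y)\xrightarrow{\pi_Y}Y[1]\to0$ of \eqref{eq:cones}. This is a conflation in $\A$ with injective middle term by \cref{prop:frobenius}. Applying $\Hom_{\A}(X,-)$ and using $\Ext_{\A}(X,\Cone(1_Y))=0$ gives a natural isomorphism
\[
 \Ext_{\A}(X,Y)\cong\operatorname{coker}\bigl(\Hom_{\A}(X,\Cone(1_Y))\xrightarrow{(\pi_Y)_*}\Hom_{\A}(X,Y[1])\bigr).
\]
Concretely, the isomorphism sends a chain map $\phi:X\to Y[1]$ to the pullback of the cone conflation along $\phi$. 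I then identify the image of $(\pi_Y)_*$ with the null-homotopic maps $X\to Y[1]$. A map through the contractible complex $\Cone(1_Y)$ is null-homotopic. Conversely, a null-homotopic map factors through $\Cone(1_X)$ by \cref{lem:homotopy}, and $\Cone(1_X)$ is projective, so the map lifts along the deflation $\pi_Y$. Hence the cokernel is $\Hom_{\K}(X,Y[1])$.

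For naturality in $Y$, given $g:Y_0\to Y_1$ I will use the morphism of cone conflations $\Cone(1_g)$. This is $g\oplus g[1]$ degreewise and is a chain map commuting with $\iota$ and $\pi$. So pushout along $g$ corresponds to postcomposition with $g[1]$. Naturality in $X$ is clear, since pullback corresponds to precomposition with $f$. This yields the first isomorphism in \eqref{eq:ext-formula}, natural in both variables, with $\Ext_{\A}(f,g)$ acting by $[\phi]\mapsto[g[1]\phi f]$.

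The second isomorphism comes from \cref{lem:homotopy} applied to $X$ and $Y[1]$, together with $H^n(Y[1])=H^{n+1}(Y)$. The product is a finite direct sum because the complexes are bounded. The map $[\phi]\mapsto(H^n(\phi))_n$ is compatible with composition by functoriality of $H^n$. Therefore $[g[1]\phi f]$ goes to $\bigl(H^{n+1}(g)\,H^n(\phi)\,H^n(f)\bigr)_n$, which is formula \eqref{eq:ext-action}. Here I use $H^n(g[1])=H^{n+1}(g)$, up to the sign convention $d_{X[1]}=-d_X$, which does not affect the maps on cohomology.

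The main obstacle is the bookkeeping. One must check that the Yoneda pullback/pushout description matches pre-/postcomposition under the connecting isomorphism, in particular that the cone conflations are functorial in $Y$ in a way compatible with $\pi_Y$. One must also check that no sign from the shift convention enters $H^{n+1}(g)$. Since $g[1]^n=g^{n+1}$ without sign, the second check is a short verification.
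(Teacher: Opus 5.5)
Your proof is correct and follows essentially the same route as the paper. The paper cites Happel for the Frobenius-category isomorphism $\Ext_{\A}(X,Y)\cong\sHom(X,\Sigma Y)$ via connecting morphisms, then uses the identification of the stable category with homotopy classes; you derive this directly, by dimension shifting along the cone conflation and showing that the maps factoring through $\pi_Y$ are exactly the null-homotopic ones, and you check naturality explicitly via the morphism of cone conflations $g\oplus g[1]$, after which \cref{lem:homotopy} gives the second isomorphism and \eqref{eq:ext-action} in both accounts.
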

\begin{proof}
For a Frobenius category, the connecting morphism of a conflation gives a natural $k$-linear isomorphism
\[
 \Ext_{\A}(X,Y)\cong\Hom_{\underline{\A}}(X,Y[1]);
\]
cf.\ the suspension and connecting-morphism constructions in \cite[Chapter~I, 2.2 and~2.7]{Happel}. By \eqref{eq:stable-category}, the right-hand side is $\Hom_{\mathcal T}(X,Y[1])\cong\Hom_{\K}(X,Y[1])$. Applying \cref{lem:homotopy} gives the second isomorphism in \eqref{eq:ext-formula}; the product is a finite sum since the complexes are bounded.

Under the first isomorphism, pullback along $f$ and pushout along $g$ send a connecting morphism $\alpha:X_1\to Y_0[1]$ to $g[1]\alpha f$. Applying cohomology gives
\[
 H^n(g[1]\alpha f)=H^{n+1}(g)\,H^n(\alpha)\,H^n(f),
\]
which proves \eqref{eq:ext-action} and naturality in both variables.
\end{proof}

\subsection{The complete ideal cotorsion pair}\label{sec:pair}

Recall the classes $\F$ (cohomology in degrees $\le0$) and $\C$ (cohomology in degrees $\ge2$) from the introduction. Note that degree $1$ is allowed in neither class; this is forced by the degree shift in \eqref{eq:ext-formula}. Define ideals of $\A$ by
\begin{align*}
 \I&=\{f\in\operatorname{Mor}\A:H^n(f)=0\text{ for every }n\ge1\},\\
 \J&=\{g\in\operatorname{Mor}\A:H^n(g)=0\text{ for every }n\le1\}.
\end{align*}
Since $H^n$ is an additive functor, $\I$ and $\J$ are ideals of $\A$, and applying the defining conditions to identity morphisms gives
\begin{equation}\label{eq:object-classes}
 \Ob(\I)=\F,\qquad \Ob(\J)=\C.
\end{equation}

\begin{proposition}\label{prop:object-ideals}
$\I=\langle\F\rangle$ and $\J=\langle\C\rangle$. In particular, $\I$ and $\J$ are object ideals.
\end{proposition}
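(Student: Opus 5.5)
The inclusions $\langle\F\rangle\subseteq\I$ and $\langle\C\rangle\subseteq\J$ follow directly from functoriality of cohomology. If $f=ba$ with $a:X\to F$, $b:F\to Y$ and $F\in\F$, then $H^n(f)=H^n(b)H^n(a)$ factors through $H^n(F)=0$ for every $n\ge1$, so $f\in\I$. The same argument, using degrees $n\le1$, gives $\langle\C\rangle\subseteq\J$. The real content is the reverse inclusions, where the difficulty is parity: the obvious intermediate object $\Ho^{\le0}(X)$ may have odd $\beta$ and so fail to lie in $\A$.

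For $\I\subseteq\langle\F\rangle$, let $f:X\to Y$ in $\A$ satisfy $H^n(f)=0$ for $n\ge1$. I first build a morphism $g$ that factors through an object of $\F$ and has the same cohomology as $f$.

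Put
\[
 W=\Ho^{\le0}(X)\oplus\stalk{0}{k}^{\varepsilon},
\]
where $\varepsilon\in\{0,1\}$ is chosen so that $\beta(W)$ is even. Then $W\in\A$, and since $W$ has cohomology only in degrees $\le0$, also $W\in\F$.

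Let $a:X\to W$ be the composite of $\Phi_X$ from \cref{lem:splitting} with the projection $\Ho(X)\oplus Q_X\to\Ho^{\le0}(X)$, followed by the inclusion into $W$. This is a chain map because the projection onto a zero-differential summand is a chain map. Moreover $H^n(a)$ is the identity of $H^n(X)$ for $n\le0$ (into the first summand).

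Since $W$ has zero differential, a chain map $b:W\to Y$ amounts to a family of linear maps into cycles $Z^n(Y)$. For $n\le0$, choose linear maps $H^n(X)\to Z^n(Y)$ lifting $H^n(f)$, and send the extra summand $\stalk{0}{k}^{\varepsilon}$ to $0$.

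Then $g=ba$ satisfies $H^n(g)=H^n(f)$ for $n\le0$. For $n\ge1$ both $H^n(g)$ and $H^n(f)$ vanish, the first because $H^n(W)=0$ and the second by hypothesis.

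Hence $f-g$ induces zero on all cohomology. By \cref{lem:homotopy}, $f-g$ factors through $\Cone(1_X)$, which is contractible, so it lies in $\A$ and has zero cohomology; in particular $\Cone(1_X)\in\F$. Since $\langle\F\rangle$ is closed under sums, $f=g+(f-g)\in\langle\F\rangle$. Alternatively, $f$ factors directly through $W\oplus\Cone(1_X)\in\F$.

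The case $\J\subseteq\langle\C\rangle$ is the same argument with $\Ho^{\ge2}(X)$ in place of $\Ho^{\le0}(X)$ and the parity correction $\stalk{2}{k}^{\varepsilon}$ in place of $\stalk{0}{k}^{\varepsilon}$.

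The only delicate step is this parity correction, together with checking that the construction of $b$ needs nothing beyond $W$ having zero differential. Combined with \eqref{eq:object-classes}, this shows that $\I=\langle\F\rangle=\langle\Ob(\I)\rangle$ and $\J=\langle\C\rangle=\langle\Ob(\J)\rangle$, so both are object ideals.
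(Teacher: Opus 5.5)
Your proof is correct and follows essentially the same route as the paper. Both split off $\Ho^{\le0}(X)$ via \cref{lem:splitting}, factor a morphism with the same cohomology as $f$ through a parity-corrected object of $\F$, and absorb the null-homotopic difference through $\Cone(1_X)$ using \cref{lem:homotopy}. The differences are cosmetic. The paper corrects parity by doubling: with $L=\Ho^{\le0}(X)$ it factors $f\iota_L\pi_L$ through $L\oplus L$, using second factor $(f\iota_L\ \ 0)$. You instead pad with $\stalk{0}{k}^{\varepsilon}$ and build the second factor by lifting $H^n(f)$ to cycles.
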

\begin{proof}
A morphism factoring through an object of $\F$ induces zero on $H^n$ for $n\ge1$, so $\langle\F\rangle\subseteq\I$.

Conversely, let $f:X\to Y$ lie in $\I$. By \cref{lem:splitting} there is a decomposition $X\cong L\oplus U\oplus Q_X$ in $\D$ with $L=\Ho^{\le0}(X)$ and $U=\Ho^{\ge1}(X)$; the summands $L$ and $U$ need not lie in $\A$. Let $\pi_L:X\to L$ and $\iota_L:L\to X$ be the corresponding projection and inclusion, and put $f_0=f\iota_L\pi_L$. The idempotent $\iota_L\pi_L$ induces the identity on $H^n(X)$ for $n\le0$ and zero for $n\ge1$. Since $H^n(f)=0$ for $n\ge1$, the maps $f$ and $f_0$ induce the same maps on all cohomology groups. By \cref{lem:homotopy}, $r=f-f_0$ factors through $\Cone(1_X)$.

The map $f_0$ factors as
\[
 X\xrightarrow{\ \binom{\pi_L}{0}\ }L\oplus L\xrightarrow{\ (f\iota_L\ \ 0)\ }Y.
\]
Here $\chi(L\oplus L)=2\chi(L)$ is even and $L\oplus L$ has cohomology only in degrees $\le0$, so $L\oplus L\in\F$. The contractible complex $\Cone(1_X)$ also lies in $\F$. Adding the two factorizations, $f$ factors through $(L\oplus L)\oplus\Cone(1_X)\in\F$. Hence $\I\subseteq\langle\F\rangle$.

The argument for $\J$ is the same, with $W=\Ho^{\ge2}(X)$ in place of $L$: if $g:X\to Y$ lies in $\J$ and $g_0=g\iota_W\pi_W$, then $g-g_0$ induces zero on cohomology, $g_0$ factors through $W\oplus W\in\C$, and $\Cone(1_X)\in\C$.
\end{proof}

\begin{proposition}\label{prop:orthogonal}
${}^\perp\J=\I$ and $\I^\perp=\J$. Hence $(\I,\J)$ is an ideal cotorsion pair.
\end{proposition}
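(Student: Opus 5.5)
The whole argument runs through the explicit formula \eqref{eq:ext-action} of \cref{prop:ext}. For $f:X_0\to X_1$ and $g:Y_0\to Y_1$ in $\A$, the map $\Ext_{\A}(f,g)$ is zero if and only if
\[
 H^{n+1}(g)\,u\,H^n(f)=0\quad\text{for every }n\text{ and every }u\in\Hom_k\bigl(H^n(X_1),H^{n+1}(Y_0)\bigr).
\]
For a fixed $n$, the composites $b\,u\,a$ of linear maps with $a$ and $b$ fixed vanish for all $u$ if and only if $a=0$ or $b=0$. Indeed, if $a\neq0$ and $b\neq0$, choose $x$ with $a(x)\neq0$ and $y$ with $b(y)\neq0$, and take a rank-one $u$ sending $a(x)$ to $y$. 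So $\Ext_{\A}(f,g)=0$ exactly when, for each $n$, either $H^n(f)=0$ or $H^{n+1}(g)=0$.

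The inclusions $\I\subseteq{}^\perp\J$ and $\J\subseteq\I^\perp$ follow at once. If $f\in\I$, then $H^n(f)=0$ for $n\ge1$. If $g\in\J$, then $H^{n+1}(g)=0$ for $n+1\le1$, that is, for $n\le0$. Every $n$ is covered by one of the two conditions.

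For the reverse inclusion ${}^\perp\J\subseteq\I$, let $f:X_0\to X_1$ satisfy $\Ext_{\A}(f,g)=0$ for all $g\in\J$, and fix $n\ge1$. We need a test morphism $g\in\J$ with $H^{n+1}(g)\neq0$. Take $g=1_W$ with $W=\stalk{n+1}{k^2}$. Since $\beta(W)=2$ is even, $W\in\A$, and its cohomology sits in degree $n+1\ge2$, so $W\in\C$ and $1_W\in\J$. Then $H^{n+1}(1_W)\neq0$, so the criterion forces $H^n(f)=0$. Hence $f\in\I$.

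Dually, for $\I^\perp\subseteq\J$, let $g\in\I^\perp$ and fix $m\le1$. Test against $f=1_V$ with $V=\stalk{m-1}{k^2}$. This object lies in $\A$, and since $m-1\le0$ it lies in $\F$, so $1_V\in\I$. Because $H^{m-1}(1_V)\neq0$, the criterion gives $H^m(g)=0$, and so $g\in\J$.

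The only point needing care is that the test objects lie in $\A$. Stalk complexes with one-dimensional cohomology are excluded by the parity condition, which is why we double them to $k^2$. Apart from this the proof is routine linear algebra.
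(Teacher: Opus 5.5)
Your proposal is correct and follows essentially the same route as the paper. The forward inclusions come directly from \eqref{eq:ext-action}, and the reverse inclusions come from testing against $1_T$ for the doubled stalk complexes $\stalk{n+1}{k^2}\in\C$ and $\stalk{m-1}{k^2}\in\F$. The only cosmetic difference is that you first isolate the general criterion ($\Ext_{\A}(f,g)=0$ if and only if, for each $n$, either $H^n(f)=0$ or $H^{n+1}(g)=0$), whereas the paper constructs the nonvanishing $u$ directly for $g=1_T$.
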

\begin{proof}
Let $f\in\I$ and $g\in\J$. In \eqref{eq:ext-action}, the $n$-th component vanishes because $H^n(f)=0$ if $n\ge1$, and $H^{n+1}(g)=0$ if $n\le0$. Thus $\I\subseteq{}^\perp\J$ and $\J\subseteq\I^\perp$.

Let $f:X_0\to X_1$ be a morphism not in $\I$, and choose $n\ge1$ and $x\in H^n(X_0)$ with $v=H^n(f)(x)\neq0$. Put $T=\stalk{n+1}{k^2}$. Then $\beta(T)=2$, so $T\in\A$, and $T\in\C$ since $n+1\ge2$; hence $1_T\in\J$. (We use $k^2$ rather than $k$ precisely so that $T\in\A$.) By \cref{prop:ext}, $\Ext_{\A}(X_i,T)\cong\Hom_k(H^n(X_i),k^2)$ for $i=0,1$, and $\Ext_{\A}(f,1_T)$ corresponds to $u\mapsto uH^n(f)$. Choose a linear map $u:H^n(X_1)\to k^2$ with $u(v)\neq0$. Then $uH^n(f)\neq0$, so $\Ext_{\A}(f,1_T)\neq0$ and $f\notin{}^\perp\J$.

Dually, let $g:Y_0\to Y_1$ be a morphism not in $\J$, and choose $m\le1$ with $H^m(g)\neq0$. Put $T=\stalk{m-1}{k^2}$; then $T\in\F$ since $m-1\le0$, so $1_T\in\I$. Now $\Ext_{\A}(T,Y_i)\cong\Hom_k(k^2,H^m(Y_i))$ and $\Ext_{\A}(1_T,g)$ corresponds to $u\mapsto H^m(g)u$. Choosing $u$ whose image contains a vector not killed by $H^m(g)$ gives $\Ext_{\A}(1_T,g)\neq0$, so $g\notin\I^\perp$.
\end{proof}

\begin{corollary}\label{cor:pair}
$(\I,\J)$ is an ideal cotorsion pair of object ideals with $\Ob(\I)=\F$ and $\Ob(\J)=\C$, and $(\F,\C)$ is a cotorsion pair of objects in $\A$.
\end{corollary}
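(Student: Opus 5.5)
This corollary is an assembly of results already proved, and I would prove it by quoting them in sequence.

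First, \cref{prop:orthogonal} gives ${}^\perp\J=\I$ and $\I^\perp=\J$, so $(\I,\J)$ is an ideal cotorsion pair. Second, \cref{prop:object-ideals} shows $\I=\langle\F\rangle$ and $\J=\langle\C\rangle$. We still have to check that these are object ideals in the sense of the introduction, namely $\I=\langle\Ob(\I)\rangle$ and $\J=\langle\Ob(\J)\rangle$. For this I would combine \cref{prop:object-ideals} with \eqref{eq:object-classes}:
\[
\langle\Ob(\I)\rangle=\langle\F\rangle=\I,\qquad \langle\Ob(\J)\rangle=\langle\C\rangle=\J.
\]
Note that \eqref{eq:object-classes} itself is immediate: $1_X\in\I$ if and only if $H^n(1_X)=1_{H^n(X)}$ vanishes for all $n\ge1$, that is, $H^n(X)=0$ for $n\ge1$, which is exactly membership in $\F$. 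The same argument gives $\Ob(\J)=\C$.

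Third, I would apply \cref{lem:object-pair} to the ideal cotorsion pair $(\I,\J)$ of object ideals. It yields that $(\Ob(\I),\Ob(\J))=(\F,\C)$ is a cotorsion pair of objects in $\A$.

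There is no genuine obstacle. The only point needing care is that $\I=\langle\F\rangle$ alone is not the definition of an object ideal, which requires $\I=\langle\Ob(\I)\rangle$. This is supplied by the identification $\Ob(\I)=\F$ in \eqref{eq:object-classes}, so the two earlier results must be used together.
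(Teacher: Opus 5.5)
Your proposal is correct and is the same as the paper's proof, which combines \eqref{eq:object-classes}, \cref{prop:object-ideals}, \cref{prop:orthogonal} and \cref{lem:object-pair} exactly as you do. Your point that $\I=\langle\F\rangle$ must be paired with $\Ob(\I)=\F$ to obtain $\I=\langle\Ob(\I)\rangle$ is precisely why the paper cites both.
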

\begin{proof}
Combine \eqref{eq:object-classes}, \cref{prop:object-ideals,prop:orthogonal,lem:object-pair}.
\end{proof}

We now prove completeness. All auxiliary complexes $L,U,V,W$ below are formed in $\D$, and membership in $\A$ is checked with $\chi$ (\cref{lem:euler}). For $A\in\A$, a decomposition as in \cref{lem:splitting} is fixed, and morphisms are transported along it.

\begin{proposition}\label{prop:precover}
Every $A\in\A$ has a special $\I$-precover.
\end{proposition}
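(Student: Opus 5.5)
Throughout, $A\in\A$, and $(\I,\J)$ is the ideal cotorsion pair of \cref{prop:orthogonal}. The plan is to use \cref{lem:object-special}(i): it suffices to find a conflation $0\to C\to E\xrightarrow{p}A\to0$ in $\A$ with $p\in\I$ and $1_C\in\J$, that is, $C\in\C$.

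First, via \cref{lem:splitting}, write $A\cong L\oplus V\oplus W\oplus Q_A$ in $\D$, with
\[
 L=\Ho^{\le0}(A),\qquad V=\stalk{1}{H^1(A)},\qquad W=\Ho^{\ge2}(A).
\]
Individually these summands need not lie in $\A$. The idea is to build, in $\D$, a sequence
\[
 0\to U[-1]\to P(U)\xrightarrow{q_U}U\to0
\]
for $U=V\oplus W=\Ho^{\ge1}(A)$, and to add trivial sequences on $L$ and $Q_A$.

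Concretely, take $E=L\oplus Q_A\oplus P(U)$ and $C=U[-1]$, with the deflation $1_L\oplus1_{Q_A}\oplus q_U$ composed with $\Phi_A^{-1}$. The relevant cohomology is as follows.
\begin{itemize}[leftmargin=1.5em]
\item $E$ has cohomology only in degrees $\le0$, since $P(U)$ is contractible, so $H^n(p)=0$ for $n\ge1$ and hence $p\in\I$.
\item $C=U[-1]$ has $H^n(C)=H^{n-1}(U)$, which is concentrated in degrees $\ge2$. So $C\in\C$ provided $C\in\A$.
\end{itemize}

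The main obstacle is parity: $\chi(U)$ may be odd, in which case $U[-1]$, $L$ and hence $E$ are not in $\A$. Then $\chi(L)$ is odd too, since $\chi(L)+\chi(U)=\chi(A)$ is even.

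To repair this, I would enlarge both sides by a copy of $L$. Put
\[
 C'=U[-1]\oplus L[-1][?]
\]
— no, that would place cohomology in the wrong degrees. Instead, add to $C$ the complex $S=\stalk{2}{k}$, which lies in $\C$-degrees, and simultaneously add to $E$ a contractible complex $P(\stalk{3}{k})$ receiving $S$. That is, take the direct sum of the sequence above with
\[
 0\to\stalk{2}{k}\to P(\stalk{3}{k})\to\stalk{3}{k}\to0,
\]
which would force $A$ to gain a summand; that is not allowed either.

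The correct fix is to modify the middle term rather than the end terms. Use the sequence
\[
 0\to U[-1]\oplus\stalk{2}{k}\to P(U)\oplus L\oplus Q_A\oplus X\to A\to 0,
\]
where $X$ is chosen with $H(X)$ concentrated in degrees $\le0$ and the extra summand $\stalk{2}{k}$ mapped in. Since $\stalk{2}{k}\to X\to0$ must be exact, I would let $X$ be a non-split extension of $\stalk{0}{k}$... Honestly, the cleanest choice is to take $X=\Cone(1_{\stalk{1}{k}})$, which is contractible, and to push the extra class into degree $1$. Then, however, $C$ acquires degree-$1$ cohomology, which is also not allowed.

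So the plan is as follows. When $\chi(U)$ is even, the construction above works directly. When $\chi(U)$ is odd, use instead
\[
 C=U[-1]\oplus U[-1],\qquad E=P(U)\oplus P(U)\oplus L\oplus Q_A\oplus U,
\]
where $p$ restricts to $q_U$ on one copy of $P(U)$ and to zero on $U$. The sequence is then no longer exact. The genuine resolution I would pursue is to take $p\in\I$ not to be surjective on cohomology. Its kernel $C$ only needs $H^n(C)=0$ for $n\le1$. The long exact sequence forces $H^n(E)\to H^n(A)$ to be surjective for $n\le0$ and $H^1(A)\hookrightarrow H^2(C)$, with $H^n(E)$ arbitrary in degrees $\le0$. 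So choose $E=L\oplus L\oplus Q_A\oplus P(U)\oplus(\text{correction})$, with parity adjusted by the doubled $L$, exactly as in the proof of \cref{prop:object-ideals}. Then $\chi$ of the kernel equals $\chi(E)-\chi(A)$, which is even once $E\in\A$. This last point is the step to check carefully.
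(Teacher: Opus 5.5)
Your construction is correct when $\chi(U)$ is even. But the case $\chi(U)$ odd, which is the one that matters (e.g.\ for $M=\stalk{0}{k}\oplus\stalk{2}{k}$), is never resolved, and the direction you finally propose cannot work. Suppose $0\to C\to E\xrightarrow{p}A\to0$ is a conflation with $C\in\C$ and $p\in\I$. The long exact cohomology sequence shows that $H^n(p)$ is bijective for every $n\le0$. Injectivity holds as well as surjectivity, since $H^n(C)=0$ for $n\le1$; so your claim that $H^n(E)$ is ``arbitrary in degrees $\le0$'' is wrong. The same sequence gives $H^1(E)=0$, because $H^1(E)\to H^1(A)$ is both injective and zero. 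Hence $\Ho^{\le0}(E)\cong L$, which rules out $E=L\oplus L\oplus\cdots$. Moreover, when $\chi(L)$ is odd, an $E$ with cohomology only in degrees $\le0$ would have $\chi(E)=\chi(L)$ odd, so it is not in $\A$. This is why your contractible corrections fail: they keep the cohomology of $E$ in degrees $\le0$, i.e.\ they try to make $E\in\F$. The parity can only be repaired by cohomology of $E$ in degrees $\ge2$.

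The missing idea is that $p\in\I$ only requires $H^n(p)=0$ for $n\ge1$; it places no condition on the cohomology of $E$. So you may add to $E$ a summand with cohomology in degrees $\ge2$ on which $p$ vanishes. That summand then reappears in the kernel and corrects the parity of $E$ and $C$ simultaneously. Your abandoned idea with $\stalk{2}{k}$ works once you add $\stalk{2}{k}$ itself to $E$ instead of a contractible complex. That is, take the direct sum of your even-case sequence with the trivial conflation $0\to\stalk{2}{k}\xrightarrow{1}\stalk{2}{k}\to0\to0$.

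The paper does this uniformly, with no case split: $E_A=L\oplus P(U)\oplus U[-1]\oplus Q$ and $p_A=1_L\oplus q_U\oplus0\oplus1_Q$. Its kernel is $U[-1]\oplus U[-1]$, which is exactly the kernel you wrote down, but now with the right middle term. Then:
\begin{itemize}
\item $\chi(C_A)=-2\chi(U)$ and $\chi(E_A)=\chi(A)-2\chi(U)$ are both even;
\item $C_A\in\C$;
\item $H^n(p_A)=0$ for $n\ge1$, since $\Ho(E_A)\cong L\oplus U[-1]$ and $p_A$ is zero on $U[-1]$.
\end{itemize}
\cref{lem:object-special}(i) then finishes the proof.
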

\begin{proof}
Write $A\cong L\oplus U\oplus Q$ with $L=\Ho^{\le0}(A)$, $U=\Ho^{\ge1}(A)$ and $Q$ contractible. Define
\[
 E_A=L\oplus P(U)\oplus U[-1]\oplus Q,\qquad
 p_A=1_L\oplus q_U\oplus0\oplus1_Q:E_A\longrightarrow L\oplus U\oplus Q\cong A.
\]
By \eqref{eq:cones}, $p_A$ is degreewise surjective with kernel $C_A\cong U[-1]\oplus U[-1]$, one copy being $\ker q_U$. This gives a short exact sequence
\begin{equation}\label{eq:precover}
 0\longrightarrow C_A\longrightarrow E_A\xrightarrow{\ p_A\ }A\longrightarrow0.
\end{equation}
The complex $U[-1]$ has zero differential and $H^n(U[-1])=H^{n-1}(U)$, which vanishes unless $n\ge2$. Thus $\chi(C_A)=-2\chi(U)$ is even and $C_A\in\C$. Also $\chi(E_A)=\chi(L)-\chi(U)=\chi(A)-2\chi(U)$ is even, so $E_A\in\A$, and \eqref{eq:precover} is a conflation in $\A$.

Since $P(U)$ and $Q$ are contractible, $\Ho(E_A)\cong L\oplus U[-1]$, and $p_A$ induces the identity on $L$ and zero on $U[-1]$. As $L$ is concentrated in degrees $\le0$, we get $H^n(p_A)=0$ for $n\ge1$, that is, $p_A\in\I$. Since $1_{C_A}\in\J$, \cref{lem:object-special}(i) shows that $p_A$ is a special $\I$-precover.
\end{proof}

\begin{proposition}\label{prop:preenvelope}
Every $A\in\A$ has a special $\J$-preenvelope.
\end{proposition}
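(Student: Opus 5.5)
The plan is to dualize the construction in \cref{prop:precover}, using the cone sequence $0\to U\xrightarrow{\iota_U}\Cone(1_U)\xrightarrow{\pi_U}U[1]\to0$ from \eqref{eq:cones} in place of $P(U)$. Fix $A\in\A$ and use \cref{lem:splitting} to write $A\cong V\oplus W\oplus Q$, with $V=\Ho^{\le1}(A)$, $W=\Ho^{\ge2}(A)$ and $Q$ contractible. The part $W$ already has cohomology in degrees $\ge2$ and can be kept. The part $V$ must be killed on cohomology, and we do this by embedding it into its contractible cone. To keep the parity even, we add a compensating copy of $V[1]$ to the middle term. Define
\[
 D_A=\Cone(1_V)\oplus V[1]\oplus W\oplus Q,\qquad
 j_A=\iota_V\oplus0\oplus1_W\oplus1_Q:\ A\cong V\oplus W\oplus Q\longrightarrow D_A .
\]
This map is degreewise injective. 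Its cokernel is $F_A\cong V[1]\oplus V[1]$: one copy comes from $\pi_V$, and the other is the extra summand.

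Next we check that everything lies in $\A$ and in the right classes, using $\chi$ and \cref{lem:euler}. The complex $V[1]$ has zero differential and $H^n(V[1])=H^{n+1}(V)$, which vanishes unless $n\le0$. So $F_A$ has cohomology only in degrees $\le0$, and $\chi(F_A)=-2\chi(V)$ is even. Hence $F_A\in\F$ and $1_{F_A}\in\I$. For the middle term,
\[
 \chi(D_A)=-\chi(V)+\chi(W)=\chi(A)-2\chi(V),
\]
which is even, so $D_A\in\A$. Therefore $0\to A\xrightarrow{j_A}D_A\to F_A\to0$ is a conflation in $\A$.

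It remains to verify that $j_A\in\J$. Since $\Cone(1_V)$ and $Q$ are contractible, $\Ho(D_A)\cong V[1]\oplus W$. On cohomology, $j_A$ is zero on the $V$-part, because it factors through the acyclic cone, and it is the identity $W\to W$. As $W$ is concentrated in degrees $\ge2$, we get $H^n(j_A)=0$ for all $n\le1$, that is, $j_A\in\J$. \Cref{lem:object-special}(ii) then shows that $j_A$ is a special $\J$-preenvelope.

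The only delicate step is the parity bookkeeping. The naive choice $D=\Cone(1_V)\oplus W\oplus Q$ would have cokernel $V[1]$, which may fail to lie in $\A$ when $\chi(V)$ is odd. Doubling the cokernel by the extra summand $V[1]$ fixes this. The extra summand does not disturb $j_A\in\J$, because $V[1]$ has no cohomology in degrees $\ge2$ that $j_A$ could hit: $j_A$ maps to it by zero, and it lives in degrees $\le0$.
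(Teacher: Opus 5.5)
Your proposal is correct and follows essentially the paper's own proof. You use the same decomposition $A\cong V\oplus W\oplus Q$, the same middle term $D_A$ with the compensating summand $V[1]$ (the summands are only reordered), the same parity and cohomology checks via $\chi$, and the same appeal to \cref{lem:object-special}(ii).
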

\begin{proof}
Write $A\cong V\oplus W\oplus Q$ with $V=\Ho^{\le1}(A)$, $W=\Ho^{\ge2}(A)$ and $Q$ contractible. Define
\[
 D_A=\Cone(1_V)\oplus W\oplus V[1]\oplus Q,\qquad
 j_A=\iota_V\oplus1_W\oplus0\oplus1_Q:A\cong V\oplus W\oplus Q\longrightarrow D_A,
\]
where the zero component maps into $V[1]$. By \eqref{eq:cones}, $j_A$ is degreewise injective with cokernel $F_A\cong V[1]\oplus V[1]$, giving
\begin{equation}\label{eq:preenvelope}
 0\longrightarrow A\xrightarrow{\ j_A\ }D_A\longrightarrow F_A\longrightarrow0.
\end{equation}
Since $H^n(V[1])=H^{n+1}(V)$ vanishes unless $n\le0$, and $\chi(F_A)=-2\chi(V)$ is even, $F_A\in\F$. Also $\chi(D_A)=\chi(W)-\chi(V)=\chi(A)-2\chi(V)$ is even, so \eqref{eq:preenvelope} is a conflation in $\A$.

In degrees $n\le1$, the cohomology of $A$ is $H^n(V)$, which $j_A$ sends into the acyclic complex $\Cone(1_V)$. Hence $H^n(j_A)=0$ for $n\le1$, that is, $j_A\in\J$. Since $1_{F_A}\in\I$, \cref{lem:object-special}(ii) shows that $j_A$ is a special $\J$-preenvelope.
\end{proof}

\begin{corollary}\label{cor:ideal-complete}
The ideal cotorsion pair $(\I,\J)$ is complete.
\end{corollary}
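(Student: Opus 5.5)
This corollary follows directly from the two preceding propositions. By definition, an ideal cotorsion pair $(\I,\J)$ is complete when every object of $\A$ has a special $\I$-precover and a special $\J$-preenvelope. \cref{prop:orthogonal} already shows that $(\I,\J)$ is an ideal cotorsion pair. So the plan is to invoke \cref{prop:precover} for the precovers and \cref{prop:preenvelope} for the preenvelopes, and nothing else is needed.

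Both propositions produce explicit conflations
\[
0\to C_A\to E_A\xrightarrow{p_A}A\to0,\qquad 0\to A\xrightarrow{j_A}D_A\to F_A\to0.
\]
In the first, $p_A\in\I$ and $C_A\in\C=\Ob(\J)$. In the second, $j_A\in\J$ and $F_A\in\F=\Ob(\I)$. \cref{lem:object-special} then turns these into special approximations in the sense of the definition.

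I would add one remark so the argument does not look circular. \cref{lem:object-special} only uses that $(\I,\J)$ is an ideal cotorsion pair, namely $\J=\I^\perp$ and $\I={}^\perp\J$, and this is exactly \cref{prop:orthogonal}.

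The main point to check, though it is already handled inside the propositions, is that $E_A$, $C_A$, $D_A$ and $F_A$ lie in $\A$. This is where the parity condition enters. The even Euler characteristics $\chi(C_A)=-2\chi(U)$ and $\chi(F_A)=-2\chi(V)$, computed via \cref{lem:euler}, guarantee that these conflations live in $\A$ even when $U$ and $V$ themselves do not. With that noted, there is no real obstacle: the corollary is a one-line combination of \cref{prop:orthogonal,prop:precover,prop:preenvelope}.
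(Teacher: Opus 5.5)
Your proposal is correct and matches the paper: the corollary is stated without a separate proof, as the immediate combination of \cref{prop:precover,prop:preenvelope}, with \cref{prop:orthogonal} supplying that $(\I,\J)$ is an ideal cotorsion pair, so that \cref{lem:object-special} applies. Your parity check $\chi(C_A)=-2\chi(U)$ and $\chi(F_A)=-2\chi(V)$, combined with additivity of $\chi$, is exactly how those propositions ensure that the conflations lie in $\A$.
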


\begin{remark}\label{rem:repair}
In \eqref{eq:precover} and \eqref{eq:preenvelope} the kernel and cokernel lie in the required object classes, but the middle terms in general do not. The extra summand $U[-1]$ in $E_A$ repairs the parity of $E_A$ at the cost of cohomology in degrees $\ge2$, which is forbidden in $\F$; the summand $V[1]$ in $D_A$ plays the dual role. The next section shows that this failure is unavoidable when the corresponding truncation has odd total cohomology dimension.
\end{remark}

\section{Failure of object completeness}\label{sec:failure}

\begin{proposition}\label{prop:criterion}
Let $A\in\A$.
\begin{enumerate}
\item $A$ has a special $\F$-precover if and only if $\beta(\Ho^{\le0}(A))$ is even.
\item $A$ has a special $\C$-preenvelope if and only if $\beta(\Ho^{\ge2}(A))$ is even.
\end{enumerate}
\end{proposition}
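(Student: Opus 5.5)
Both parts are proved the same way, so I describe (i) and treat (ii) by the dual argument. Write $A\cong L\oplus U\oplus Q$ as in the proof of \cref{prop:precover}, with $L=\Ho^{\le0}(A)$, $U=\Ho^{\ge1}(A)$ and $Q$ contractible. Since $\beta(A)=\beta(L)+\beta(U)$ is even, $\beta(L)$ and $\beta(U)$ have the same parity.

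\emph{Sufficiency.} Suppose $\beta(L)$ is even. Then $\chi(L)$ and $\chi(U)$ are even by \cref{lem:euler}, so $L,U\in\A$. I modify the construction of \cref{prop:precover} by dropping the parity-repairing summand $U[-1]$. Put
\[
E=L\oplus P(U)\oplus Q,\qquad p=1_L\oplus q_U\oplus1_Q .
\]
This map is degreewise surjective with kernel $U[-1]$. The complex $U[-1]$ has cohomology only in degrees $\ge2$ and $\chi(U[-1])=-\chi(U)$ is even, so $U[-1]\in\C$. Also $\Ho(E)\cong L$, so $E\in\F$. Hence $0\to U[-1]\to E\to A\to0$ is a special $\F$-precover.

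\emph{Necessity.} Let $0\to C\to F\xrightarrow{p}A\to0$ be a conflation with $F\in\F$ and $C\in\C$. Take the long exact cohomology sequence of this degreewise split sequence in $\D$.
\begin{itemize}[leftmargin=1.5em]
\item For $n\le0$, we have $H^n(C)=0$ and $H^{n+1}(C)=0$ (as $n+1\le1$), so $H^n(F)\cong H^n(A)$.
\item For $n\ge1$, we have $H^n(F)=0$.
\end{itemize}
Therefore $\beta(F)=\sum_{n\le0}\dim H^n(F)=\beta(\Ho^{\le0}(A))$. Since $F\in\A$, this number is even.

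Equivalently, the triangle $C\to F\to A\to C[1]$ in $\mathcal T$ identifies $F$ with the truncation $\tau^{\le0}A$. The only point to check is the degree bookkeeping, and it works because degree $1$ is excluded from both $\F$ and $\C$, so the connecting maps vanish in the relevant range.

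\emph{Part (ii).} Write $A\cong V\oplus W\oplus Q$ as in \cref{prop:preenvelope}, with $V=\Ho^{\le1}(A)$ and $W=\Ho^{\ge2}(A)$.
\begin{itemize}[leftmargin=1.5em]
\item If $\beta(W)$ is even, then $V,W\in\A$. The conflation
\[
0\to A\to \Cone(1_V)\oplus W\oplus Q\to V[1]\to0
\]
works: its middle term has cohomology $W$, hence lies in $\C$, and its cokernel $V[1]$ lies in $\F$.
\item Conversely, suppose $0\to A\to C\to F\to0$ is a conflation with $C\in\C$ and $F\in\F$. For $n\ge2$ we have $H^{n-1}(F)=H^n(F)=0$, so $H^n(A)\cong H^n(C)$. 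Also $H^n(C)=0$ for $n\le1$. Hence $\beta(C)=\beta(W)$, which must be even because $C\in\A$.
\end{itemize}

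I expect no real obstacle. The only care needed is the index check in the long exact sequences: the vanishing at $n+1$ in (i) and at $n-1$ in (ii) must hold in exactly the required ranges.
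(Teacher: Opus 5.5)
Your proof is correct and essentially matches the paper's. Sufficiency uses the same conflations $0\to U[-1]\to L\oplus P(U)\oplus Q\to A\to0$ and its dual. For necessity, you read off $H^n(F)\cong H^n(A)$ for $n\le0$ (and $H^n(A)\cong H^n(C)$ for $n\ge2$) from the long exact cohomology sequence, which is just an elementary unpacking of the paper's identification $F\simeq\tau^{\le0}A$ (resp.\ $C\simeq\tau^{\ge2}A$) via uniqueness of truncation triangles in $\mathcal T$.
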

\begin{proof}
Use the standard $t$-structure on the ambient category $\mathcal T$ of \eqref{eq:stable-category}, with truncation functors $\tau^{\le r}$ and $\tau^{\ge r}$.

(i) A conflation $0\to C\to F\to A\to0$ with $F\in\F$ and $C\in\C$ induces a triangle
\[
 F\longrightarrow A\longrightarrow C[1]\longrightarrow F[1]
\]
in $\mathcal T$. Since $F\in\mathcal T^{\le0}$ and $C[1]\in\mathcal T^{\ge1}$, uniqueness of the truncation triangle gives $F\simeq\tau^{\le0}A$. Hence $\beta(\Ho^{\le0}(A))=\beta(F)$ is even.

Conversely, write $A\cong L\oplus U\oplus Q$ as in \cref{prop:precover}. If $\beta(L)$ is even, then so is $\beta(U)=\beta(A)-\beta(L)$. Hence $U[-1]\in\C$ and $L\oplus P(U)\oplus Q\in\F$, and
\[
 0\longrightarrow U[-1]\longrightarrow L\oplus P(U)\oplus Q\xrightarrow{1_L\oplus q_U\oplus1_Q}A\longrightarrow0
\]
is a special $\F$-precover.

(ii) A conflation $0\to A\to C\to F\to0$ with $C\in\C$ and $F\in\F$ yields, after rotation, a triangle
\[
 F[-1]\longrightarrow A\longrightarrow C\longrightarrow F
\]
in $\mathcal T$. Now $F[-1]\in\mathcal T^{\le1}$ and $C\in\mathcal T^{\ge2}$, so $C\simeq\tau^{\ge2}A$. Thus $\beta(\Ho^{\ge2}(A))=\beta(C)$ is even.

Conversely, write $A\cong V\oplus W\oplus Q$ as in \cref{prop:preenvelope}. If $\beta(W)$ is even, then so is $\beta(V)$, and
\[
 0\longrightarrow A\xrightarrow{\iota_V\oplus1_W\oplus1_Q}\Cone(1_V)\oplus W\oplus Q\longrightarrow V[1]\longrightarrow0
\]
is a special $\C$-preenvelope.
\end{proof}

\begin{corollary}\label{cor:failure}
Let $M=\stalk{0}{k}\oplus\stalk{2}{k}$. Then $M\in\A$, but $M$ has neither a special $\F$-precover nor a special $\C$-preenvelope. In particular, the cotorsion pair $(\F,\C)$ is neither special precovering nor special preenveloping.
\end{corollary}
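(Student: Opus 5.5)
This is a direct application of \cref{prop:criterion}, so the plan is to compute the relevant cohomology dimensions of $M$.

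First we check that $M\in\A$. Both summands are stalk complexes with zero differential, so $H^0(M)=k$, $H^2(M)=k$, and all other cohomology groups vanish. Hence $\beta(M)=2$ is even, and $M\in\A$. Equivalently, $\chi(M)=1+1=2$, as in \cref{lem:euler}.

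Next, the truncations in \cref{prop:criterion} are
\[
 \Ho^{\le0}(M)=\stalk{0}{k},\qquad \Ho^{\ge2}(M)=\stalk{2}{k},
\]
and each has $\beta=1$, which is odd. By \cref{prop:criterion}(i), $M$ has no special $\F$-precover. By \cref{prop:criterion}(ii), $M$ has no special $\C$-preenvelope.

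The final assertion follows immediately. If $(\F,\C)$ were special precovering, every object of $\A$ would have a special $\F$-precover; in particular $M$ would, which we have just excluded. The same argument with $M$ rules out special preenveloping.

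There is no real obstacle. All the substance lies in the necessity direction of \cref{prop:criterion}, namely the identification $F\simeq\tau^{\le0}M$ via uniqueness of truncation triangles, and that is already established. The only point to take care over is reading off $\Ho^{\le0}$ and $\Ho^{\ge2}$ correctly for the stalk complexes.
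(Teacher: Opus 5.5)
Your proof is correct and follows the paper's argument exactly. It computes $\beta(M)=2$ and observes that $\Ho^{\le0}(M)=\stalk{0}{k}$ and $\Ho^{\ge2}(M)=\stalk{2}{k}$ each have odd total cohomology dimension, then applies both parts of \cref{prop:criterion}.
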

\begin{proof}
We have $\beta(M)=2$, while $\Ho^{\le0}(M)=\stalk{0}{k}$ and $\Ho^{\ge2}(M)=\stalk{2}{k}$ both have $\beta=1$. Apply \cref{prop:criterion}.
\end{proof}

\begin{proof}[Proof of \cref{thm:main}]
The properties of $\A$ are \cref{prop:category,prop:frobenius}. The statements about $(\I,\J)$ are \cref{cor:pair,cor:ideal-complete}. The failure of object completeness, with the witness $M$, is \cref{cor:failure}.
\end{proof}

Nevertheless, $\F$ is precovering and $\C$ is preenveloping in $\A$. Indeed, by \cref{prop:object-ideals}, the $\I$-precover $p_A$ of \cref{prop:precover} factors as $E_A\to F\xrightarrow{b}A$ with $F\in\F$. Every morphism $F'\to A$ with $F'\in\F$ lies in $\I$, hence factors through $p_A$ and therefore through $b$; thus $b$ is an $\F$-precover. The dual argument, using \cref{prop:preenvelope}, applies to $\C$. Thus the failure concerns only special object approximations.

\begin{remark}[Relation to known positive results]\label{rem:STWZ}
For a complete ideal cotorsion pair of object ideals in a Frobenius category, \cite[Corollary~1.3, $(1)\Leftrightarrow(3)$]{STWZ} characterises object completeness by the existence of a special $\C$-preenvelope for every $B\in\smd(\F\oplus\C)$, the class of direct summands in $\A$ of objects $F\oplus C$ with $F\in\F$ and $C\in\C$. Our object $M$ belongs to this class, since
\[
 M\oplus M\cong\stalk{0}{k^2}\oplus\stalk{2}{k^2}\in\F\oplus\C,
\]
but has no such preenvelope by \cref{cor:failure}.

The same corollary gives a positive answer when $\underline{\A}$ is Krull--Schmidt. This does not apply here: under \eqref{eq:stable-category}, splitting the idempotent of \cref{prop:category} would require an object with cohomology $\stalk{0}{k}$, which has odd total dimension. Thus $\underline{\A}$ is not idempotent complete and hence not Krull--Schmidt. The category $\A$ itself is likewise not Krull--Schmidt, so the hypothesis in \cite{ZZ} is also absent.
\end{remark}

\begin{remark}[Truncation and a dense triangulated subcategory]\label{rem:truncation}
Under $K_0(\mathcal T)\cong\mathbb Z$, induced by $\chi$, the subcategory $\mathcal S$ in \eqref{eq:stable-category} corresponds to $2\mathbb Z$ in Thomason's classification \cite[Theorem~2.1]{Thomason}. It is dense because every $X\in\mathcal T$ is a summand of $X\oplus X\in\mathcal S$.

The classes $\F$ and $\C$ are the restrictions of $\mathcal T^{\le0}$ and $\mathcal T^{\ge2}$, the ambient cotorsion pair obtained by shifting both classes in \cite[Example~2.5(1)]{Nakaoka} by $[-1]$; see also \cite[Definition~2.1]{Nakaoka}. This $t$-structure does not restrict to $\mathcal S$: for the witness $M$, both $[\tau^{\le0}M]$ and $[\tau^{\ge2}M]$ equal $1\notin2\mathbb Z$. By \cref{prop:criterion}, these missing truncations obstruct object approximations, whereas doubling preserves cohomological support and restores even parity, as used in \cref{prop:object-ideals}.
\end{remark}

\section{Cotorsion pairs of \texorpdfstring{$t$}{t}-structure type}\label{sec:positive}

Throughout this section $(\A,\E)$ is a Frobenius exact category and $\omega$
denotes its class of projective--injective objects; for the category of
\cref{sec:construction} this is the class of contractible complexes,
by \cref{prop:frobenius}. We write $\stab=\A/\omega$ for the
stable category, $\ul f$ for the image of a morphism $f$ of $\A$, and $\Sigma$
for the suspension of $\stab$, so that $\Sigma X$ is the cokernel of an
inflation $X\to I$ with $I\in\omega$ and $\Omega=\Sigma^{-1}$. Recall
\cite[Chapter~I, 2.2--2.8]{Happel} that $\stab$ is triangulated, that a
conflation $0\to X\xrightarrow{f}Y\xrightarrow{g}Z\to0$ induces a triangle
$X\xrightarrow{\ul f}Y\xrightarrow{\ul g}Z\to\Sigma X$, that every triangle is
isomorphic to one of this form, and that there are isomorphisms
$\Ext_{\A}(X,Y)\cong\sHom(X,\Sigma Y)$ natural in both variables.

Let $(\I,\J)$ be an ideal cotorsion pair of object ideals and put
$\F=\Ob(\I)$, $\C=\Ob(\J)$. By \cref{lem:object-pair},
$(\F,\C)$ is a cotorsion pair of objects, so $\F={}^\perp\C$, $\C=\F^\perp$ and
$\omega\subseteq\F\cap\C$.

\begin{lemma}\label{lem:stable-retract}
Let $X\in\A$. If $X$ is a retract in $\stab$ of an object of $\C$
\textup{(}resp.\ $\F$\textup{)}, then $X\in\C$ \textup{(}resp.\ $X\in\F$\textup{)}.
In particular, $\F$ and $\C$ are closed under isomorphisms in $\stab$.
\end{lemma}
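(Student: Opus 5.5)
The plan is to use the natural isomorphism $\Ext_{\A}(X,Y)\cong\sHom(X,\Sigma Y)$ to turn the retract hypothesis into a statement about Ext groups. Since $\C=\F^\perp$ and $\F={}^\perp\C$ by \cref{lem:object-pair}, membership in either class is detected by the vanishing of $\Ext_{\A}$ against the other class. So it suffices to show that being a stable retract of an object with vanishing Ext against a fixed class forces vanishing Ext.

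\emph{The case of $\C$.} Suppose $X$ is a retract of $C\in\C$ in $\stab$. Then there are morphisms $s:X\to C$ and $r:C\to X$ in $\A$ with $\ul r\,\ul s=1_X$ in $\stab$. Fix $F\in\F$. Naturality of the isomorphism in the second variable shows that the maps
\[
\Ext_{\A}(F,X)\to\Ext_{\A}(F,C)\to\Ext_{\A}(F,X)
\]
induced by $s$ and $r$ correspond to postcomposition with $\Sigma\ul s$ and $\Sigma\ul r$ on $\sHom(F,\Sigma-)$. Their composite is therefore induced by $\Sigma(\ul r\ul s)=1$, so it is the identity. This composite factors through $\Ext_{\A}(F,C)=0$, hence $\Ext_{\A}(F,X)=0$ for every $F\in\F$. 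Thus $X\in\F^\perp=\C$.

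\emph{The case of $\F$.} This is dual: use naturality in the first variable and $\Ext_{\A}(X,C)=0$ for all $C\in\C$ to get $X\in{}^\perp\C=\F$.

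\emph{Isomorphism closure.} An isomorphism in $\stab$ is in particular a retraction, so closure under stable isomorphism follows at once.

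An alternative argument avoids the stable isomorphism. One could note that $rs-1_X$ factors through an object of $\omega$, on which $\Ext_{\A}$ vanishes from either side. Then $\Ext_{\A}(1_F,rs)=\Ext_{\A}(1_F,1_X)$ directly, since $\Ext_{\A}(1_F,-)$ kills morphisms factoring through injectives.

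I don't anticipate a real obstacle. The only point needing care is that Ext is functorial for morphisms of $\A$ and annihilates maps factoring through projective–injectives, so that stable identities give identities on Ext; I will cite the naturality from \cite{Happel} for this.
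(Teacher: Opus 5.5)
Your proof is correct and is essentially the paper's argument. The paper writes $1_X-rs=\beta\alpha$ through some $P\in\omega$ and observes that $X\xrightarrow{\binom{\alpha}{s}}P\oplus C\xrightarrow{(\beta\ \ r)}X$ composes to $1_X$ in $\A$, so $\Ext_{\A}(F,X)$ is a retract of $\Ext_{\A}(F,P\oplus C)=0$; this is exactly your alternative argument. Your main route via naturality of $\Ext_{\A}(-,-)\cong\sHom(-,\Sigma -)$ is also valid, but slightly less elementary, since it uses that isomorphism instead of only the injectivity (resp.\ projectivity) of $P$.
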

\begin{proof}
Let $s:X\to C$ and $r:C\to X$ be morphisms of $\A$ with $C\in\C$ and
$\ul{rs}=1_X$. Then $1_X-rs=\beta\alpha$ for some $\alpha:X\to P$ and
$\beta:P\to X$ with $P\in\omega$, and
\[
 X\xrightarrow{\ \binom{\alpha}{s}\ }P\oplus C\xrightarrow{\ (\beta\ \ r)\ }X
\]
composes to $1_X$. Hence $\Ext_{\A}(F,X)$ is a retract of
$\Ext_{\A}(F,P\oplus C)=0$ for every $F\in\F$, because $P$ is injective.
Thus $X\in\F^\perp=\C$. The statement for $\F$ is dual, using that $P$ is
projective and $\F={}^\perp\C$.
\end{proof}

\begin{lemma}\label{lem:t-type}
The following conditions are equivalent.
\begin{enumerate}
\item $\sHom(F,C)=0$ for all $F\in\F$ and $C\in\C$.
\item $\Sigma F\in\F$ for every $F\in\F$.
\item $\Omega C\in\C$ for every $C\in\C$.
\end{enumerate}
\end{lemma}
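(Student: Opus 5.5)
The plan is to translate everything into the stable category using the natural isomorphism $\Ext_{\A}(X,Y)\cong\sHom(X,\Sigma Y)$, together with the descriptions $\F={}^\perp\C$ and $\C=\F^\perp$ and \cref{lem:stable-retract}. The latter guarantees that membership in $\F$ or $\C$ can be tested up to isomorphism in $\stab$, so we may freely replace $\Sigma F$ by any concrete representative, namely the cokernel of an inflation $F\to I$ with $I\in\omega$.

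For (i)$\Rightarrow$(ii), I would take $F\in\F$ and aim to show $\Sigma F\in{}^\perp\C=\F$. For $C\in\C$ we have
\[
\Ext_{\A}(\Sigma F,C)\cong\sHom(\Sigma F,\Sigma C)\cong\sHom(F,C)=0,
\]
using that $\Sigma$ is an autoequivalence of $\stab$. Hence $\Sigma F\in{}^\perp\C=\F$.

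For (ii)$\Rightarrow$(i), given $F\in\F$ and $C\in\C$, write $F\cong\Sigma(\Omega F)$. The step needing a little care is that we only know $\Sigma$ preserves $\F$, not $\Omega$. So instead we use $\Sigma F$ directly:
\[
\sHom(F,C)\cong\sHom(\Sigma F,\Sigma C)\cong\Ext_{\A}(\Sigma F,C)=0,
\]
since $\Sigma F\in\F$ by (ii) and $C\in\C=\F^\perp$.

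The equivalence (i)$\Leftrightarrow$(iii) is dual. For (i)$\Rightarrow$(iii),
\[
\Ext_{\A}(F,\Omega C)\cong\sHom(F,\Sigma\Omega C)\cong\sHom(F,C)=0,
\]
so $\Omega C\in\F^\perp=\C$. For (iii)$\Rightarrow$(i),
\[
\sHom(F,C)\cong\sHom(F,\Sigma\Omega C)\cong\Ext_{\A}(F,\Omega C)=0.
\]

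The only point I expect to need attention is bookkeeping. The objects $\Sigma F$ and $\Omega C$ are defined only up to stable isomorphism, so we should choose actual objects of $\A$ (cokernels of inflations into, or kernels of deflations from, objects of $\omega$) and invoke \cref{lem:stable-retract} to see that the conclusion does not depend on this choice. Everything else is a direct chain of natural isomorphisms.
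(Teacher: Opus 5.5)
Your proposal is correct and follows the paper's own proof: the same two chains of natural isomorphisms $\sHom(F,C)\cong\sHom(\Sigma F,\Sigma C)\cong\Ext_{\A}(\Sigma F,C)$ and $\sHom(F,C)\cong\sHom(F,\Sigma\Omega C)\cong\Ext_{\A}(F,\Omega C)$, combined with $\F={}^\perp\C$ and $\C=\F^\perp$. Like the paper, you use \cref{lem:stable-retract} to make conditions (ii) and (iii) independent of the chosen representatives of $\Sigma F$ and $\Omega C$; the aside about writing $F\cong\Sigma(\Omega F)$ plays no role and can be dropped.
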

\begin{proof}
The objects $\Sigma F$ and $\Omega C$ are well defined up to isomorphism in
$\stab$, so conditions (ii) and (iii) make sense by
\cref{lem:stable-retract}. For $F\in\F$ and $C\in\C$ there are natural
isomorphisms
\begin{gather*}
 \sHom(F,C)\cong\sHom(\Sigma F,\Sigma C)\cong\Ext_{\A}(\Sigma F,C),\\
 \sHom(F,C)\cong\sHom(F,\Sigma\Omega C)\cong\Ext_{\A}(F,\Omega C).
\end{gather*}
Hence (i) holds if and only if $\Sigma F\in{}^\perp\C=\F$ for all $F\in\F$,
and if and only if $\Omega C\in\F^\perp=\C$ for all $C\in\C$.
\end{proof}

\begin{definition}\label{def:t-type}
We say that $(\F,\C)$ is \emph{of $t$-structure type} if the equivalent
conditions of \cref{lem:t-type} hold.
\end{definition}

The terminology is explained as follows. If $(\F,\C)$ is of $t$-structure type
and complete, then $(\F,\Sigma\C)$ is a $t$-structure on $\stab$ with aisle
$\F$: indeed $\sHom(\F,\Sigma\C)\cong\Ext_{\A}(\F,\C)=0$,
$\Sigma\F\subseteq\F$, $\Sigma^{-1}(\Sigma\C)=\C\subseteq\Sigma\C$ by
\cref{lem:t-type}(iii), and special $\F$-precovers give triangles
$F\to A\to\Sigma C\to\Sigma F$. In the example of
\cref{sec:construction}, $\F$ and $\Sigma\C=\C[1]$ are the
restrictions of the aisle $\mathcal T^{\le0}$ and the coaisle $\mathcal T^{\ge1}$ of the standard $t$-structure on $\mathcal T=D^b(\Vect_k)$; see
\cref{rem:truncation,rem:example-t-type}.

For classes $\mathcal X,\mathcal Y$ of objects of $\stab$, let $\mathcal X*\mathcal Y$ be the class of objects $Z$ admitting a triangle $X\to Z\to Y\to\Sigma X$ with $X\in\mathcal X$ and $Y\in\mathcal Y$. This operation is associative by the octahedral axiom \cite[Lemme~1.3.10]{BBD}. Since $\F$ and $\C$ are closed under extensions in $\A$ and every triangle of $\stab$ is induced by a conflation, \cref{lem:stable-retract} gives $\F*\F\subseteq\F$ and $\C*\C\subseteq\C$ in $\stab$.

\begin{lemma}\label{lem:stable-approx}
An object $A\in\A$ has a special $\F$-precover if and only if $A\in\F*\Sigma\C$ in $\stab$, and it has a special $\C$-preenvelope if and only if $A\in\Sigma^{-1}\F*\C$, that is, $\Sigma A\in\F*\Sigma\C$.
\end{lemma}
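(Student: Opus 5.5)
The plan is to translate conflations into triangles and back, using two facts recalled at the start of this section: every triangle of $\stab$ is isomorphic to one induced by a conflation, and $\F$, $\C$ are closed under isomorphism in $\stab$ by \cref{lem:stable-retract}. No $t$-structure hypothesis is needed; the lemma uses only that $(\F,\C)$ is a cotorsion pair in a Frobenius category.

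\emph{Forward direction for precovers.} Suppose $0\to C\to F\to A\to0$ is a special $\F$-precover. It induces a triangle $C\to F\to A\to\Sigma C$. Rotating gives a triangle $F\to A\to\Sigma C\to\Sigma F$, so $A\in\F*\Sigma\C$.

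\emph{Converse for precovers.} Suppose there is a triangle $F\to A\to\Sigma C\to\Sigma F$ in $\stab$ with $F\in\F$ and $C\in\C$. Rotate it backwards to get a triangle $C'\to F\to A\to\Sigma C'$ with $C'\cong C$ in $\stab$; here $\Sigma^{-1}\Sigma C\cong C$. This triangle is isomorphic to one induced by a conflation $0\to X\to Y\to Z\to0$, with $X\cong C$, $Y\cong F$ and $Z\cong A$ in $\stab$. By \cref{lem:stable-retract}, $X\in\C$ and $Y\in\F$.

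The remaining issue is that $Z$ is only stably isomorphic to $A$, not equal to it. This is the main obstacle, and I would handle it as follows.
\begin{enumerate}
\item Stable isomorphism means $Z\oplus P\cong A\oplus P'$ in $\A$ for some $P,P'\in\omega$. To see this, compare the deflations $Y\oplus P(A)\to A$ built from the triangle map; alternatively, one can go straight to step (ii).
\item More directly, take a morphism $\phi\colon Y\to A$ representing the stable map $F\to A$, and a deflation $\pi\colon I\to A$ with $I\in\omega$. Then $(\phi\ \ \pi)\colon Y\oplus I\to A$ is a deflation in $\A$; its kernel exists since $\A$ is exact and a sum of a morphism with a deflation is a deflation.
\item Let $K$ be the kernel. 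Then $0\to K\to Y\oplus I\to A\to0$ is a conflation and $Y\oplus I\in\F$. The induced triangle $K\to Y\oplus I\to A\to\Sigma K$ has the same stable map $Y\oplus I\to A$, up to the isomorphism $Y\oplus I\cong Y$ in $\stab$.
\item By uniqueness of cones in a triangulated category, $K\cong\Sigma^{-1}\Sigma C\cong C$ in $\stab$, so $K\in\C$ by \cref{lem:stable-retract}.
\end{enumerate}
This produces a special $\F$-precover of $A$ itself. Using this direct construction avoids step (i) altogether.

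\emph{Preenvelopes.} Dually, take a representative $\psi\colon A\to Y$ of the stable map $A\to C$ and an inflation $\iota\colon A\to I$ with $I\in\omega$. The map $\binom{\psi}{\iota}\colon A\to Y\oplus I$ is an inflation. The cokernel of the resulting conflation is stably isomorphic to the third term of the triangle, which lies in $\F$. The forward direction again just reads off the triangle $A\to C\to F\to\Sigma A$ from the conflation.

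Finally, $A\in\Sigma^{-1}\F*\C$ means there is a triangle $\Sigma^{-1}F\to A\to C\to F$. Applying $\Sigma$, this is equivalent to a triangle $F\to\Sigma A\to\Sigma C\to\Sigma F$, up to the standard sign change; so the condition is exactly $\Sigma A\in\F*\Sigma\C$.
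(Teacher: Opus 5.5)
Your proposal is correct and essentially the paper's proof: steps (ii)--(iv) add a deflation $I\to A$ with $I\in\omega$ to a representative of the stable map, then identify the kernel with $C$ in $\stab$ by uniqueness of the third vertex and \cref{lem:stable-retract}, exactly as the paper does. The preliminary realization of the rotated triangle by a conflation, and the optional step (i), are unnecessary, since you can take a representative $f\colon F\to A$ of the given stable map directly, as the paper does.
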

\begin{proof}
A conflation $0\to C\to F\to A\to0$ induces a triangle $F\to A\to\Sigma C\to\Sigma F$. Conversely, let $F_1\xrightarrow{\ul f}A\to\Sigma C_1\to\Sigma F_1$ be a triangle with $F_1\in\F$ and $C_1\in\C$, and choose a deflation $\pi:P\to A$ with $P\in\omega$. Then $(f\ \ \pi):F_1\oplus P\to A$ is a deflation. Since $P\cong0$ in $\stab$, its kernel $K$ fits into a triangle $K\to F_1\xrightarrow{\ul f}A\to\Sigma K$, so $K\cong C_1$ in $\stab$ and $K\in\C$ by \cref{lem:stable-retract}. This gives a special $\F$-precover $0\to K\to F_1\oplus P\to A\to0$. The statement about preenvelopes is dual.
\end{proof}

We first record what the criterion of Saor\'{\i}n and \v{S}\v{t}ov\'{\i}\v{c}ek gives in this setting.

\begin{proposition}[{cf.\ \cite[Proposition~3.11]{SS}}]\label{prop:SS}
Let $(\A,\E)$ be a Frobenius exact category whose stable category $\stab$ is idempotent complete, and let $(\I,\J)$ be an ideal cotorsion pair of object ideals such that $(\F,\C)=(\Ob(\I),\Ob(\J))$ is of $t$-structure type. If $\I$ is precovering, in particular if $(\I,\J)$ is complete, then $(\F,\C)$ is complete.
\end{proposition}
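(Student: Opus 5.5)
The plan is to show that $\F$ is the aisle of a $t$-structure on $\stab$, with coaisle $\Sigma\C$. Once this is known, \cref{lem:stable-approx} produces the special approximations: every $A$ lies in $\F*\Sigma\C$, so $A$ has a special $\F$-precover. Applying the same fact to $\Sigma A$ shows that $A$ also has a special $\C$-preenvelope. To get the aisle property, I would invoke the Saor\'{\i}n--\v{S}\v{t}ov\'{\i}\v{c}ek criterion \cite[Proposition~3.11]{SS}. In an idempotent complete triangulated category, that criterion says a full subcategory $\mathcal U$ is the aisle of a $t$-structure as soon as it is closed under $\Sigma$, extensions and direct summands, and is precovering (contravariantly finite). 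Its proof runs through homotopy colimits, or through splitting the idempotent obtained by iterating precovers.

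The first step is to check the closure properties of $\F$, viewed as a subcategory of $\stab$. By \cref{lem:stable-retract}, $\F$ is closed under isomorphisms and retracts in $\stab$. Since $\stab$ is idempotent complete, this amounts to closure under direct summands there. \Cref{lem:t-type}(ii) gives $\Sigma\F\subseteq\F$, and the discussion after \cref{def:t-type} gives $\F*\F\subseteq\F$. I also need $\F^{\perp}$ computed in $\stab$, meaning $\{Y:\sHom(\F,Y)=0\}$, to be exactly $\Sigma\C$. For $\Sigma C$ with $C\in\C$ we have $\sHom(F,\Sigma C)\cong\Ext_{\A}(F,C)=0$. Conversely, if $\sHom(\F,Y)=0$, then $\Ext_{\A}(F,\Omega Y)=0$ for all $F\in\F$, so $\Omega Y\in\C$ and hence $Y\cong\Sigma\Omega Y\in\Sigma\C$. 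Thus the coaisle is $\Sigma\C$, as required.

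The second step is to show that $\F$ is precovering in $\stab$, using only that $\I$ is precovering in $\A$. Let $p:E\to A$ be an $\I$-precover. Since $\I=\langle\F\rangle$, $p$ factors as $E\to F\xrightarrow{b}A$ with $F\in\F$. Every morphism $F'\to A$ with $F'\in\F$ lies in $\I$, so it factors through $p$, and therefore through $b$. Hence $b$ is an $\F$-precover in $\A$, and its image $\ul b$ is an $\F$-precover in $\stab$, because morphisms in $\stab$ lift to $\A$. If $(\I,\J)$ is complete, then special $\I$-precovers are in particular $\I$-precovers, so this hypothesis is covered as well.

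I expect the main obstacle to be applying \cite[Proposition~3.11]{SS} with exactly the hypotheses it requires. It may demand that $\stab$ be idempotent complete and $\mathcal U$ suspended and precovering, or it may phrase the conclusion as the existence of right $\mathcal U$-approximations with cones in $\mathcal U^{\perp}$. My plan is to verify each hypothesis as above and then translate the resulting triangle $F\to A\to Y\to\Sigma F$, with $Y\in\F^{\perp}=\Sigma\C$, back into a conflation via \cref{lem:stable-approx}. If the cited statement turns out to need more, I would fall back on its standard proof. That proof takes a precover $F_0\to A$ and completes it to a triangle $F_0\to A\to A_1\to\Sigma F_0$. Iterating gives $A\to A_1\to A_2\to\cdots$, and one shows that the composite $A_n\to A_{2n}$ gives an idempotent endomorphism of a suitable object. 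That idempotent splits by idempotent completeness, and its image is the required truncation.
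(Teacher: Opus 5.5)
Your proposal is correct and follows the paper's proof essentially step for step. You extract an $\F$-precover $b$ from the $\I$-precover and check that $\F$ is a precovering suspended subcategory of $\stab$ closed under summands. You then invoke \cite[Proposition~3.11]{SS}, identify the Hom-orthogonal of $\F$ in $\stab$ as $\Sigma\C$, and conclude via \cref{lem:stable-approx} applied to $A$ and to $\Sigma A$; the fallback sketch is never needed.
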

\begin{proof}
Let $A\in\A$ and let $p:E\to A$ be an $\I$-precover. Write $p=ba$ with $b:F_0\to A$ and $F_0\in\F$. Every morphism $F\to A$ with $F\in\F$ lies in $\I$, so it factors through $p$ and hence through $b$. Thus $b$ is an $\F$-precover of $A$, and so is $\ul b$ in $\stab$. By \cref{lem:stable-retract,lem:t-type}, $\F$ is closed under direct summands and $\Sigma$ in $\stab$, and $\F*\F\subseteq\F$. Hence $\F$ is a precovering suspended subcategory of $\stab$, and \cite[Proposition~3.11]{SS} shows that it is the aisle of a $t$-structure. Thus every object of $\stab$ lies in $\F*\F^{\perp_{\mathrm{Hom}}}$, where $\F^{\perp_{\mathrm{Hom}}}=\{X\in\stab:\sHom(F,X)=0\text{ for all }F\in\F\}$. Since $\sHom(F,X)\cong\Ext_{\A}(F,\Sigma^{-1}X)$, \cref{lem:stable-retract} gives $\F^{\perp_{\mathrm{Hom}}}=\Sigma\C$. So every object, in particular $\Sigma A$ for each $A\in\A$, lies in $\F*\Sigma\C$, and \cref{lem:stable-approx} completes the proof.
\end{proof}

The next result is a direct, object-wise version of \cref{prop:SS}, which locates the obstruction.

\begin{proposition}\label{prop:t-type-positive}
Let $(\A,\E)$ be a Frobenius exact category, and let $(\I,\J)$ be an ideal cotorsion pair of object ideals such that $(\F,\C)=(\Ob(\I),\Ob(\J))$ is of $t$-structure type. Suppose that $A\in\A$ has a special $\I$-precover. Then there are a conflation $0\to C_0\xrightarrow{\iota}G\xrightarrow{q}A\to0$ with $C_0\in\C$ and $q\in\I$, and morphisms $a:G\to F_0$, $v:F_0\to G$ and $t:G\to C_0$ with $F_0\in\F$ and $1_G=va+\iota t$, such that $\varepsilon=\ul{\iota t}$ is an idempotent of $G$ in $\stab$. If $\varepsilon$ splits in $\stab$, in particular if $\stab$ is idempotent complete, then $A$ has a special $\F$-precover.
\end{proposition}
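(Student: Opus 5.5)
The plan is to build the conflation from the given special $\I$-precover, produce $a$, $v$ and $t$ from $q\in\I=\langle\F\rangle$ and the vanishing of $\Ext$, and use the $t$-structure-type hypothesis to show that $\varepsilon$ is idempotent in $\stab$. If $\varepsilon$ splits, we then place $A$ in $\F*\Sigma\C$ and apply \cref{lem:stable-approx}.

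\emph{Step 1: the conflation.} Let $p:E\to A$ be a special $\I$-precover. By definition it is the pushout of a conflation $0\to Y\to Z\to A\to0$ along some $g:Y\to B$ with $g\in\I^\perp=\J=\langle\C\rangle$. Write $g=g_2g_1$ with $g_1:Y\to C_0$, $g_2:C_0\to B$ and $C_0\in\C$. Pushing out along $g_1$ gives a conflation $0\to C_0\xrightarrow{\iota}G\xrightarrow{q}A\to0$. Pushing this out further along $g_2$ recovers $p$, so there is a morphism $h:G\to E$ with $ph=q$. Since $\I$ is an ideal, $q\in\I$.

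\emph{Step 2: the morphisms $a$, $v$, $t$.} Since $q\in\I=\langle\F\rangle$, we can factor $q=ba$ with $a:G\to F_0$, $b:F_0\to A$ and $F_0\in\F$. Because $\Ext_{\A}(F_0,C_0)=0$, the morphism $b$ lifts along the deflation $q$, giving $v:F_0\to G$ with $qv=b$. Then $q(1_G-va)=q-ba=0$, so $1_G-va$ factors through the kernel $\iota$. This yields $t:G\to C_0$ with $1_G=va+\iota t$.

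\emph{Step 3: $\varepsilon$ is idempotent.} Put $\varepsilon=\ul{\iota t}$, so that $1-\varepsilon=\ul{va}$. Then
\[
\varepsilon(1-\varepsilon)=\ul{\iota}\,\ul{tv}\,\ul a ,
\]
and $\ul{tv}\in\sHom(F_0,C_0)=0$ by \cref{lem:t-type}(i). Hence $\varepsilon=\varepsilon^2$ in $\stab$. This is the one place where the $t$-structure-type hypothesis enters, and it is exactly what is needed.

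\emph{Step 4: splitting gives a special $\F$-precover.} Suppose $\varepsilon$ splits, so $G\cong X\oplus Y'$ in $\stab$ with $\varepsilon$ the idempotent onto $Y'$ and $1-\varepsilon$ the idempotent onto $X$. Then $Y'$ is a retract in $\stab$ of $C_0$: the composite $Y'\to G\xrightarrow{\ul t}C_0\xrightarrow{\ul\iota}G\to Y'$ equals $\varepsilon|_{Y'}=1_{Y'}$. Likewise $X$ is a retract of $F_0$ via $\ul a$ and $\ul v$.

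These summands need not be images of objects of $\A$ a priori. So we choose objects of $\A$ representing them, which we may take to be the cokernels of the corresponding maps in $\stab$ realised by conflations. \cref{lem:stable-retract} then gives $Y'\in\C$ and $X\in\F$. I expect this to be the only delicate point. If needed, it can be avoided by working throughout with the classes $\F$, $\C$ closed under isomorphism in $\stab$; that closure holds by \cref{lem:stable-retract}, and every object of $\stab$ is an object of $\A$.

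From the triangle $C_0\to G\to A\to\Sigma C_0$ we get
\[
A\in G*\Sigma C_0\subseteq(X*Y')*\Sigma C_0=X*(Y'*\Sigma C_0).
\]
Moreover $Y'*\Sigma C_0=\Sigma(\Omega Y'*C_0)\subseteq\Sigma(\C*\C)\subseteq\Sigma\C$, using $\Omega\C\subseteq\C$ from \cref{lem:t-type}(iii) together with $\C*\C\subseteq\C$. Hence $A\in\F*\Sigma\C$, and \cref{lem:stable-approx} yields a special $\F$-precover of $A$.
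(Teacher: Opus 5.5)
Your proposal is correct and follows the paper's proof essentially step for step: you push out along the factor $g_1$, lift $b$ through $q$, get idempotence from $\ul{tv}=0$, and conclude $A\in(\F*\C)*\Sigma\C\subseteq\F*\Sigma\C$. Your use of $\Omega\C\subseteq\C$ in place of $\C\subseteq\Sigma\C$ is the same fact. Two small remarks. First, the ``delicate point'' you flag in Step~4 is not one: objects of $\stab$ are by definition objects of $\A$, so \cref{lem:stable-retract} applies directly to the summands. Second, the passage from ``$\varepsilon$ splits'' to $G\cong X\oplus Y'$ deserves a word: $1-\varepsilon$ then splits too because $\stab$ is triangulated, as the paper notes.
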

\begin{proof}
Let $p:E\to A$ be a special $\I$-precover. It is obtained from a conflation $0\to Y\to Z\to A\to0$ by pushout along some $g:Y\to B$ in $\I^\perp=\J=\langle\C\rangle$. Write $g=g_2g_1$ with $g_1:Y\to C_0$ and $C_0\in\C$. The pushout along $g_1$ is a conflation
\[
 0\longrightarrow C_0\xrightarrow{\ \iota\ }G\xrightarrow{\ q\ }A\longrightarrow0
\]
\cite[Proposition~2.12]{Buhler}, and its pushout along $g_2$ is the given one, so $q=ph$ for some $h:G\to E$; thus $q\in\I$. Write $q=ba$ with $a:G\to F_0$, $b:F_0\to A$ and $F_0\in\F$. Since $\Ext_{\A}(F_0,C_0)=0$, $b$ factors through $q$, say $b=qv$. Then $q(1_G-va)=0$, so
\[
 1_G=va+\iota t\qquad\text{for some }t:G\to C_0.
\]
Composing with $t$ gives $t=(tv)a+(t\iota)t$, and $\ul{tv}=0$ because $(\F,\C)$ is of $t$-structure type. Hence $\ul t=\ul{t\iota}\,\ul t$, so $\varepsilon:=\ul{\iota t}$ is an idempotent of $G$ in $\stab$, with $1-\varepsilon=\ul{va}$. If $\varepsilon$ splits in $\stab$, then so does $1-\varepsilon$, as $\stab$ is triangulated, and $G\cong G_1\oplus G_2$ in $\stab$, where $G_1$, the image of $\ul{va}$, is a retract of $F_0$, and $G_2$, the image of $\ul{\iota t}$, is a retract of $C_0$. By \cref{lem:stable-retract}, $G_1\in\F$ and $G_2\in\C$, so $G\in\F*\C$. The triangle $G\xrightarrow{\ul q}A\to\Sigma C_0\to\Sigma G$ now gives
\[
 A\in(\F*\C)*\Sigma\C=\F*(\C*\Sigma\C)\subseteq\F*(\Sigma\C*\Sigma\C)\subseteq\F*\Sigma\C,
\]
where we used $\C\subseteq\Sigma\C$, which is \cref{lem:t-type}(iii), and $\C*\C\subseteq\C$. By \cref{lem:stable-approx}, $A$ has a special $\F$-precover.
\end{proof}

\begin{remark}\label{rem:example-t-type}
The cotorsion pair $(\F,\C)$ of \cref{sec:construction} is of $t$-structure type: objects of $\F$ and $\C$ have cohomology in disjoint sets of degrees, so $\sHom(\F,\C)=0$ by \cref{lem:homotopy}. We identify the idempotent of \cref{prop:t-type-positive} in this example. For $M=\stalk{0}{k}\oplus\stalk{2}{k}$, the conflation \eqref{eq:precover} has kernel in $\C$ and middle term
\[
 E_M=\stalk{0}{k}\oplus P\bigl(\stalk{2}{k}\bigr)\oplus\stalk{3}{k},
\]
so \cref{prop:t-type-positive} applies with $G=E_M$ and $q=p_M$. Only $H^0(E_M)$ and $H^3(E_M)$ are nonzero. Since $1-\varepsilon=\ul{va}$ factors through $\F$ and $\ul q\varepsilon=0$, while $H^0(p_M)$ is bijective, we get
\[
 H^0(\varepsilon)=0,\qquad H^3(\varepsilon)=1.
\]
By \cref{lem:homotopy}, $\varepsilon$ is the projection onto $\stalk{3}{k}$ in $\mathcal T=D^b(\Vect_k)$. If it split in $\stab$, its image would be an object of $\A$ isomorphic in $\mathcal T$ to $\stalk{3}{k}$, which has odd total cohomology dimension. Thus the parity obstruction of \cref{prop:criterion} is precisely the failure of this idempotent to split.
\end{remark}

The idempotent completion of $\A$ is $\D$, since every object and every short exact sequence in $\D$ is a summand of its double in $\A$. In $\D$, all truncations are available, and the support classes in degrees $\le0$ and $\ge2$ form a complete object cotorsion pair by the same Ext formula and cone sequences. Thus this obstruction disappears after idempotent completion, leaving the following question.

\begin{question}\label{qu:idempotent-complete}
Let $(\I,\J)$ be a complete ideal cotorsion pair in an idempotent complete exact category. If both $\I$ and $\J$ are object ideals, must the cotorsion pair $(\Ob(\I),\Ob(\J))$ be complete? 
\end{question}

\subsection*{Declaration of AI use}
The counterexample presented in this paper was developed with the assistance of OpenAI's GPT-6 Astra. The authors organized the mathematical arguments and wrote the manuscript with the assistance of Anthropic's Claude. The authors independently verified all mathematical arguments and references and take full responsibility for the mathematical content and the final version of the manuscript.

\end{document}